\documentclass{article}
\usepackage[T1]{fontenc} % check whether it works
\usepackage{relsize}
\usepackage[utf8]{inputenc}
\usepackage{realboxes}
\usepackage{dsfont}
\usepackage{soul}
\usepackage{marvosym}
\usepackage[normalem]{ulem}
\usepackage{leftindex}
\usepackage{float}
\usepackage{dashbox}
\usepackage{fancybox}
\usepackage[english]{babel}
\usepackage{graphicx}%
\usepackage{multirow}%
\usepackage{amsmath,amssymb,amsfonts}%
\usepackage{amsthm}%
\usepackage{mathrsfs}%
\usepackage[title]{appendix}%
\usepackage[svgnames]{xcolor}%
\usepackage{textcomp}%
\usepackage{manyfoot}%
\usepackage{booktabs}%
\usepackage{algorithm}%
\usepackage{algorithmicx}%
\usepackage{algpseudocode}%
\usepackage{listings}%
\usepackage{tikz-cd}
\usepackage{pgfplots}
\pgfplotsset{compat=1.13}
\usepackage[all,cmtip]{xy}
\usepackage{graphicx,tikz-cd,pgf}
\usetikzlibrary{positioning, shapes}
\usetikzlibrary{arrows.meta}
\usepackage{caption}
\usepackage{tkz-euclide}

\usepackage{stmaryrd} \usepackage{trimclip}
\makeatletter
\DeclareRobustCommand{\shortto}{\mathrel{\mathpalette\short@to\relax}}
\newcommand{\short@to}[2]{
  \mkern2mu
  \clipbox{{.5\width} 0 0 0}{$\m@th#1\vphantom{+}{\shortrightarrow}$}}
\makeatother

\makeatletter
\DeclareRobustCommand{\lshortto}{\mathrel{\mathpalette\lshort@to\relax}}
\newcommand{\lshort@to}[2]{
  \mkern2mu
  \clipbox{{.1\width} 0 0 0}{$\m@th#1\vphantom{+}{\shortleftarrow}$}}
\makeatother

\newcommand{\cfunction}[1]{{\mathsf{1}}_{#1}}

\DeclareMathOperator{\PoissonISymbol}{\mathsf{P}}
\newcommand{\PoissonI}[1]{\PoissonISymbol[#1]}

\newcommand{\newradius}{s}

\DeclareMathOperator{\oscSymbol}{osc}
\newcommand{\osc}[3]{\oscSymbol_{#3}(#1,#2)}

\DeclareMathOperator{\nullsets}{\boldsymbol{\mathcal{N}}}

\DeclareMathOperator{\almostev}{\text{\tiny{a.e.}}}

\newcommand{\car}{\bafNAME}
\newcommand{\bafNAME}{\boldsymbol{\mathsf{A}}}

\newcommand{\baffz}{% Firm, Zero variables
\bafNAME_{\fbsf}
}

\newcommand{\bafaz}{% Attached, Zero variables
\bafNAME_{\absf}
}

\newcommand{\bafcz}{% curvilinear, Zero variables
\bafNAME_{\cbsf}
}

\newcommand{\bafsz}{% sequential, Zero variables
\bafNAME_{\sbsf}
}

\newcommand{\baftNAME}{\bafNAME_{\tbsf}}

\newcommand{\baftz}{% Zero variables
\baftNAME
}

\newcommand{\aesubset}[2]{
#1
\overset{
\scriptstyle{\almostev}
}
{\subset}
#2
}

\newcommand{\bhd}{\bdryhd}
\newcommand{\bdryhd}{\partial\hd}

\newcommand{\aeequal}[2]{
#1
\overset{
\scriptstyle{\almostev}
}
{=}
#2
}

\newcommand{\sobb}{
\mathcal{B}
}

\newcommand{\bb}{\beta}

\newcommand{\acube}[2]{% cube (with) center (and) arc-length
\mathsf{cube}(#1,#2)
}

\newcommand{\acal}[2]{% arc (with) center (and) arc-length
\beta(#1,#2)
}

\newcommand{\Qb}{\bchar{Q}}

\newcommand{\Nb}{\bchar{N}}

\newcommand{\Xb}{\bchar{X}}

\newcommand{\Bb}{\bchar{B}}
\newcommand{\Vb}{\bchar{V}}
\newcommand{\wVb}{\widetilde{\Vb}}
\newcommand{\Wb}{\bchar{W}}
\newcommand{\Cb}{\bchar{C}}

\newcommand{\Ab}{\bchar{A}}

\newcommand{\Ob}{\bchar{O}}
\newcommand{\Sb}{\bchar{S}}
\newcommand{\Ub}{\bchar{U}}

\newcommand{\vb}{\bchar{v}}
\newcommand{\Zb}{\bchar{Z}}
\DeclareMathOperator{\vob}{\vb_{o}}

\newcommand{\lingvalue}[3]{% 1 function 2 approach region 3 boundary point
\lim_{\stackrel{z\in #2}{z\to #3}}{#1(z)}
}

\definecolor{mygray}{rgb}{0.8,0.8,0.8}
\makeatletter
    \newcommand{\colorboxed}[3][white]{\fcolorbox{#2}{#1}{\m@th$\displaystyle#3$}}
\makeatother

\DeclareMathOperator{\dpoint}{\dchar{z}}

\DeclareMathOperator{\bpoint}{\mathsf{w}}
\DeclareMathOperator{\bopoint}{\bpoint_{o}}

\newcommand{\dnewsegnato}{{q}_{o}}
\newcommand{\bsegnato}{{b}^{\prime}}
\newcommand{\zsegnato}{\tilde{\dpoint}}
\DeclareMathOperator{\bpointv}{\mathsf{v}}
\DeclareMathOperator{\bpointx}{\mathsf{x}}

\newcommand{\ballName}{{{B}}}
\newcommand{\ball}[2]{\ballName(#1,#2)}

\newcommand{\ballrel}[2]{\approachr_{#1}^{#2}}

\newcommand{\dimn}{d}
\newcommand{\dprime}{\dimn'}

\DeclareMathOperator{\ntbvSymbol}{
\boldsymbol{\flat}
}
\newcommand{\ntbv}[1]{
{#1}_{\ntbvSymbol}
}

\newcommand{\angolo}{b}

\makeatletter
\newcommand*\bigcdot{\mathpalette\bigcdot@{.5}}
\newcommand*\bigcdot@[2]{\mathbin{\vcenter{\hbox{\scalebox{#2}{$\m@th#1\bullet$}}}}}
\makeatother

\DeclareMathSymbol{\mhyphen}{\mathord}{AMSa}{"39}

\DeclareMathOperator{\udone}{%BoundaryOfTheUnitDisc
\mathbb{D}
}

\usepackage[mathscr]{euscript}

\usepackage{stmaryrd} \usepackage{trimclip}

\newcommand{\absf}{\boldsymbol{\mathsf{a}}}

\newcommand{\cbsf}{\boldsymbol{\mathsf{c}}}

\newcommand{\fbsf}{\boldsymbol{\mathsf{f}}}

\newcommand{\sbsf}{\boldsymbol{\mathsf{s}}}

\newcommand{\tbsf}{\boldsymbol{\mathsf{t}}}

\newcommand{\dfunctionharm}{\mathtt{u}}

\newcommand{\dfunction}{\mathtt{f}}
\newcommand{\bfunction}{\mathsf{f}}
\newcommand{\bfun}{\bfunction}
\newcommand{\bfung}{\mathsf{g}}
\newcommand{\bfunp}{\mathsf{p}}

\DeclareMathOperator{\approachr}{% approach r(egion)
\af}

\DeclareMathOperator{\af}{\Lambda}

\newcommand{\botud}{\partial\udone}

\DeclareMathOperator{\tpsSymbol}{\mathcal{P}}
\newcommand{\tps}[1]{
\tpsSymbol{\kern-1.1pt}{(#1)}}

\newcommand{\dchar}[1]{\mathtt{#1}}
\newcommand{\bchar}[1]{\mathsf{#1}}

\DeclareMathOperator{\FatousetSymbol}{\mathsf{Fatou}}
\newcommand{\Fatouset}[1]{ % 1 function 
\FatousetSymbol[{#1}]
}

\DeclareMathOperator{\hinfty}{\hSymbol^{\infty}}

\newcommand{\hd}{E_{\dimn}^{+}}

\DeclareMathOperator{\hSymbol}{\mathtt{h}}

\newcommand{\distanza}[3]{%1=first set, 2=second set, 3=radius
\dist\left[
A_1\cap\partial B(w,r),A_1\cap\partial B(w,r)
\right]
}

\newcommand{\eqdef}{\overset{\mathrm{def}}{=\joinrel=}}

\DeclareMathOperator{\NN}{{\mathbb{N}}}

\DeclareMathOperator{\RR}{\mathbb{R}}
\DeclareMathOperator{\dist}{dist}

\DeclareMathOperator{\CC}{%BoundaryOfTheUnitDisc
\mathbb{C}
}

\newtheoremstyle{slthmstyle}  % name of the style to be used
   {}       % measure of space to leave above the theorem. E.g.: 3pt
   {}       % measure of space to leave below the theorem. E.g.: 3pt
   {\slshape}   % name of font to use in the body of the theorem
   {}        % measure of space to indent
   {\bfseries}  % name of head font
   {}   % punctuation between head and body
   {2mm}       % space after theorem head
   {}           % Manually specify head
\theoremstyle{slthmstyle}
\newtheorem{theorem}{Theorem}[section]
\newtheorem*{corollary*}{Corollary.}

\newtheorem{lemma}[theorem]{Lemma}

\theoremstyle{definition}

\newtheorem{claim}[theorem]{Claim}
\theoremstyle{definition}

\newtheorem*{Littlewood}{Littlewood's Theorem (1927)}

\newtheorem*{Rudin'sTheorem}{Rudin's Theorem (1979)}
\newtheorem*{Rudin'sVOIFunction}{A Very Oscillatory Inner Function (1979)}

\numberwithin{equation}{section}

\title{A Littlewood-Type Theorem for Harmonic Functions in Euclidean Half-Spaces}
\author{Fausto Di Biase  \\
Dipartimento di Economia\\
Universit\`a ``G.\! D'Annunzio'' di Chieti-Pescara,\\
Viale Pindaro, 42, I-65127, Pescara, Italy\\
email: {\tt fausto.dibiase@unich.it}\\
\\	Haguma Gratien \\
Department of Mathematics\\ University of Rwanda\\
KN 67 Nyarugenge, Kigali, 3900 Kigali, Rwanda\\
email: {\tt hagugrat@gmail.com}\\
\\	Olof Svensson \\
Department of Science and Technology\\ Link\"oping University\\
SE-60174 Norrk\"oping,  Sweden\\
email: {\tt olof.svensson@liu.se}
}

\date{\today}
\begin{document}

\maketitle

\begin{abstract}

In 1927 J.E. Littlewood proved that, for bounded harmonic functions on the unit disc, the Fatou-type result, on the existence of almost everywhere boundary values through  approach regions that are nontangential, will fail for any system of \textit{tangential} approach regions that has the following two additional properties: 
It consists of \textit{curves} ending at the various boundary points, and it is rotationally invariant. 

However, in 1984 A. Nagel and E.M. Stein — elaborating results of Rudin (1979) and Nagel, Rudin and Shapiro (1982) — proved the existence of rotationally-invariant systems of tangential \textit{sequences} along which a Fatou-type result holds, i.e., along which every bounded harmonic function converges a.e. to its nontangential boundary values. 
Moreover, they extended their result to higher-dimensional Euclidean 
half-spaces. 

The Nagel-Stein result has prompted 
the question of formulating and proving a Littlewood type theorem that can also can be applied to tangential approach regions which are \textit{sequential}. 

In this paper we prove a Littlewood-type theorem for bounded harmonic functions in higher-dimensional Euclidean half-spaces, for systems of tangential approach regions which may very well be \textit{sequential}.

This is the first result of this kind, apart from a recent result of ours whose setting is the unit disc. Indeed, in all the other results of Littlewood type, the tangential approach regions were required to be \textit{curvilinear} or at least to possess a certain topological property that 
\textit{excluded} the possibility that they could be sequential.

\end{abstract}

{\footnotesize

\paragraph{Keywords.} Fatou type theorems, Littlewood type theorems, bounded harmonic functions, Euclidean half-spaces, approach regions, almost everywhere convergence, tangential approach regions, nontangential approach regions, very oscillatory approach regions, 
Littlewood sets, the Nagel--Stein phenomenon, projectively adjacent approach regions. 
MSC 31B25, 42B30.}

\section{Introduction}

In Euclidean settings, the boundary behavior of bounded harmonic functions, 
in the sense of the existence of almost everywhere boundary values along nontangential approach regions,  
has been studied in the following three contexts: 

\noindent (i) the unit disc $\udone\eqdef\{z\in\CC:
|z|<1\}$ in the complex plane, where $|z|$ is Euclidean length in $\CC$ 
(\cite{Fatou1906}, 
\cite{Privalov1956}, 
\cite{LohwaterPiranian1957}, 
\cite{CollingwoodLohwater1966}, 
\cite{Duren}, 
\cite{Hoffman},  
\cite{Koosis}, 
\cite{Pommerenke}, 
\cite{DiBiaseStokolosSvenssonWeiss2006}, \cite{Zygmund1949}); 

\noindent (ii)
in Euclidean half-spaces $\hd$ in $\RR^n$, defined by  
$$
\hd\eqdef\{x\in\RR^{\dimn}:x(\dimn)>0\}\subset\RR^\dimn
$$ 
where $\dimn\geq2$ and $x\equiv(x(1),x(2),\ldots,x(\dimn))$, for $x\in\RR^{\dimn}$
(\cite{Calderon1950}, \cite{Stein1961}, 
\cite{Stein1970bis},
\cite{SteinWeiss1971},
\cite{Fefferman-Stein1972}, \cite{Zygmund1988}, 
\cite{Stein1993}); 

(iii) in NTA domains in $\RR^n$ (a class of bounded domains which properly includes Lipschitz domains — \cite{JerisonKenig1982}, \cite{DiBiase1998}). 

Results on the 
existence of almost everywhere boundary values along nontangential approach regions 
are known as theorems \textit{of Fatou type}: They have been extended to more general settings, for example, 
Riemannian manifolds of pinched negative curvature (\cite{AndersonSchoen1985},
\cite{Ancona1987}); for related results, see \cite{ArcozziDiBiaseUrbanke1996},
\cite{CifuentesDorronsoroSueiro1992},
\cite{DiBiase2009},
\cite{DiBiaseWeiss2010},
\cite{DiBiaseWeiss2010 bis},
\cite{DiBiaseKrantz2021bis},
\cite{DiBiaseKrantz2023},
\cite{DiBiaseKrantz2024},
\cite{DiBiaseKrantz2025},
\cite{HakimSibony1983},
\cite{Hirata2005},
\cite{Koranyi1969abis},
\cite{Krantz1991}, 
\cite{Krantz2001}, 
\cite{Krantz2007},  
\cite{Krantz2019},
\cite{Sueiro1986},
\cite{Sueiro1987},
\cite{Sueiro1990}.

Theorems \textit{of Littlewood type} assert the \textit{non-existence} 
of almost everywhere boundary values along \textit{tangential} approach regions.  
Here the central notion  is that of a \textit{system of approach regions}. 
In the setting of the unit disc, a \textit{system of approach regions in $\udone$} is 
a 
family 
$\af\equiv{\{\af(\bpoint)\}}_{\bpoint\in\botud}$ (where 
$\botud$ is the boundary of the unit disc) such that
$\af(\bpoint)\subset\udone$ and 
$\bpoint 
\text{belongs to the closure of }
\af(\bpoint)
\text{ in }
\CC$, $\forall \bpoint\in\botud$. 
J.E. Littlewood was the first to prove a result of this kind in $\udone$ (\cite{Littlewood1927}). 
\begin{Littlewood}
Assume that $\af$ is a system of approach regions in $\udone$ and that 
\begin{description}
\item[(1)] $\af(\bpoint)$ is tangential to $\botud$ at $\bpoint$, 
for each $\bpoint\in\botud$; 
\item[(2)] $\af(\bpoint)$ is a curve in $\udone$ 
ending at $\bpoint$, for each $\bpoint\in\botud$;
\item[(3)] $\af$ is rotationally invariant.
\end{description}
Then there exists a bounded harmonic function $\dfunction$ on $\udone$ such that  
\begin{equation}
\text{for a.e.}\bpoint\in\botud, 
\lingvalue{\dfunction}{\af(\bpoint)}{\bpoint}
\text{ does not exist}
\label{e:lvdneudone} 
\end{equation}
\end{Littlewood} 

Littlewood's Theorem has been 
\textit{extended} to Euclidean half-spaces (\cite{Aikawa1991}), 
but not yet to NTA domains in 
$\RR^n$.

Littlewood's Theorem has been \textit{refined} in various ways 
(\cite{Aikawa1990},
\cite{Aikawa1991},
\cite{DiBiaseStokolosSvenssonWeiss2006}, 
\cite{DiBiaseGratienSvensson}). 

One refinement that has been the object of recent investigations pertains \textbf{(3)}. 
Indeed, it has been understood that it is not necessary to require that $\af$ is rotationally invariant, but it suffices to ask that it is \textit{regular}, i.e., that for each open set 
$U\subset\udone$ the set $\{\bpoint\in\botud:\af(\bpoint)\cap{}U\not=\emptyset\}$ is  measurable. 

Another refinement that has been the object of recent investigations pertains \textbf{(2)}:  
The goal here is to understand the role played by this hypothesis, which 
is particularly strong, since it 
\textit{excludes} that the approach regions may end \textit{sequentially} at the boundary. 
In the present paper, we contribute to this problem in the setting of Euclidean half-spaces. 
A first step in this direction has been achieved in~\cite{DiBiaseStokolosSvenssonWeiss2006}, 
were it was shown that it is not necessary to ask 
that each approach region $\af(\bpoint)\subset\udone$ is a curve, 
but it suffices to ask that it is \textit{attached to} $\bpoint$, i.e., the set $\{\bpoint\}\cup\af(\bpoint)\subset\CC$ is 
connected, $\forall\bpoint\in\botud$; observe that this topological property still 
\textit{excludes} the possibility that 
an approach region of this kind may be a \textit{sequence} 
ending at $\bpoint$.  A second step was achieved in 
\cite{DiBiaseGratienSvensson}, in the context of the unit disc, 
where instead of \textbf{(2)} a different hypothesis 
is assumed, which allows the approach regions to end sequentially to the boundary. 

\subsection{The significance of our main result}

Until recently, all the theorems of Littlewood type given so far are {restricted} to 
systems of tangential approach regions which are either \textit{curvilinear} or 
share with the curvilinear approach regions a topological property 
that \textit{excludes} the possibility that they may end \textit{sequentially} at the given boundary points. 

The first step toward the removal of this restriction has been achieved in 
\cite{DiBiaseGratienSvensson}, in the setting of the unit disc. 
In this paper, 
we take another step in this direction, and 
 remove the aforementioned restriction in the setting of Euclidean half-spaces. 
Indeed, our result \textit{also} applies to tangential approach regions that are \textit{sequential}. Our result can be better appreciated if we recall that 
A. Nagel and E.M. Stein \cite{Nagel--Stein1984}, elaborating a result of 
W. Rudin \cite{Rudin1979bis} and A. Nagel, W. Rudin and J.H. Shapiro \cite{NagelRudinShapiro}, proved 
the existence 
of translation invariant systems of tangential 
\textit{and sequential}
approach regions 
in $\hd$
along which \textit{all} bounded harmonic functions on $\hd$ 
\textit{converge a.e.} to 
their nontangential boundary values. See also \cite{Rudin1988}.

In order to achieve our result, we have identified a new property, 
that approach regions may have, called \textit{firmness} — see~\eqref{e:firmONE} —  
which does not depend on the \textit{continuous} or \textit{discrete} nature of the 
approach regions involved, and hence 
is \textit{transversal}, so to speak, 
to the properties of being \textit{sequential} or  \textit{curvilinear}. 

In our new theorem of Littlewood type in Euclidean half-spaces 
we use the following notation. 
The boundary of $\hd$ in $\RR^{\dimn}$ is denoted by 
$\bdryhd$. Then $\bhd=\{(z,0):z\in\RR^{\dprime}\}$,  where
$\dprime\eqdef\dimn-1$. 
The Euclidean ball in $\RR^{\dimn}$ of 
center $x\in\RR^{\dimn}$ and radius $r>0$ is denoted by 
$\ball{x}{r}$, hence $\ball{x}{r}\eqdef\{y\in\RR^{\dimn}:|y-x|<r\}$,
where 
$|z|$ is the Euclidean length of $z\in\RR^{\dimn}$.

An \textit{approach region in $\hd$ at $\bpoint\in\bhd$} is a set $A\subset\hd$ such that, 
for each $r>0$, the set $A\cap\ball{\bpoint}{r}$, 
called \textit{the $r$-tail of $A$ at $\bpoint$}, is nonempty. 

If $A$ is an approach region in $\hd$ at $\bpoint\in\bhd$, and 
$\angolo\in\NN$, we say that \textit{$A$ is $\angolo$-firm at $\bpoint$} if 
\begin{equation}
\begin{aligned}
\forall r>0\,
\,
\exists \rho > 0\,
\,
\forall \bpointv & \in \bdryhd\cap\ball{\bpoint}{\rho}
\,\,
\exists y \in A\cap \ball{\bpoint}{r}
\text{ such that}
\\
& 
 |\bpointv-y| < (1+\angolo) y(\dimn)
\end{aligned}
\label{e:firmONE}
\end{equation}
We will see momentarily the geometric meaning of~\eqref{e:firmONE}.

A
\textit{system of approach regions in $\hd$} is a 
family 
$\af\equiv{\{\af(\bpoint)\}}_{\bpoint\in\bdryhd}$ such that
$\af(\bpoint)$ is an approach region in $\hd$ at $\bpoint$, 
$\forall \bpoint\in\bdryhd$. 

The collection of all systems of approach regions in $\hd$ is denoted by $\car$.
If $\af\in\car$, then
\begin{enumerate}
\item For $\bpoint\in\bhd$ and $r>0$, 
the $r$-tail of $\af(\bpoint)$ at $\bpoint$ 
is denoted by $\ballrel{\bpoint}{r}$, and hence 
$\ballrel{\bpoint}{r}\eqdef \af(\bpoint)\cap \ball{\bpoint}{r}$;

\item 
If 
$\displaystyle{\inf_{r>0} \sup_{x \in \ballrel{\bpoint}{r}} \frac{x(\dimn)}{|x-\bpoint|}=0}$, 
$\forall\bpoint\in\bdryhd$, 
then 
$\af$ is called 
a system of \textit{tangential} approach regions in $\hd$;
$\baftz\subset\car$ denotes the set of all 
systems of \textit{tangential} approach regions in $\hd$.

\item 
If $\af\in\car$ and, 
for each open $U \subset \hd $, the set 
$ \left\{ \bpoint\in\bdryhd: U \cap \af(\bpoint) \neq \emptyset  \right\}$
 is a measurable subset of $\bhd$,
then 
$\af$ is said to be \textit{regular}. 

\end{enumerate}

\subsection{Our main result}

\begin{theorem}
Assume that 
\begin{description}
\item[(1)] 
$\af$ is a  system of \textup{tangential} approach regions in $\hd$;
\item[(2)] for each $\bpoint\in\bhd$ there exists 
$\angolo\in\NN$ such that $\af(\bpoint)$ is $\angolo$-firm at $\bpoint$;
\item[(3)] $\af$ is regular.
\end{description}
Then there exists a bounded harmonic function $\dfunction$ on $\hd$ such that  
\begin{equation}
\text{for a.e.}\bpoint\in\bdryhd, 
\lingvalue{\dfunction}{\af(\bpoint)}{\bpoint}
\text{ does not exist}
\label{e:lvdne} 
\end{equation}
\label{thm:maintheorem}
\end{theorem}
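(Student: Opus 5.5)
We construct $\dfunction$ as the Poisson integral of a suitable bounded function $\bfun$ on $\bdryhd$ (identified with $\RR^{\dprime}$). The key is the geometric meaning of firmness. If $y\in\hd$ and $|\bpointv-y|<(1+\angolo)y(\dimn)$, then $\bpointv$ lies in the ball of radius comparable to $y(\dimn)$ around the projection of $y$, and $y$ lies in the nontangential cone of aperture $\angolo$ at $\bpointv$. So $\angolo$-firmness of $\af(\bpoint)$ says the following: for every $r$ there is $\rho$ such that every boundary point $\bpointv$ near $\bpoint$ "sees" some point of the $r$-tail of $\af(\bpoint)$ nontangentially with aperture $\angolo$. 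Tangentiality says that these points $y$ have $y(\dimn)\ll|y-\bpoint|$. Hence the points $\bpointv$ covered by $y$ lie in a small ball $\ball{\bar y}{(2+\angolo)y(\dimn)}$ that is far from $\bpoint$ relative to its radius.

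\textbf{Step 1 (reduction).} By countable additivity we may fix $\angolo$ and work on the set $F_\angolo$ of $\bpoint$ where $\af(\bpoint)$ is $\angolo$-firm; it suffices that the conclusion holds a.e.\ on each $F_\angolo$ of positive measure. Regularity (3) guarantees that the sets
\[
\{\bpoint: \af(\bpoint)\cap U\neq\emptyset\}
\]
are measurable for the open sets $U$ we use (unions of Whitney-type boxes where $\dfunction$ is large or small). This makes "for a.e. $\bpoint$" statements meaningful and lets us apply Lebesgue density and Vitali covering arguments. We may also intersect $F_\angolo$ with sets on which $\rho=\rho(r)$ and the tangential rate are uniform; this is possible by an Egorov-type argument after passing to measurable subsets.

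\textbf{Step 2 (construction, Littlewood-set style).} We build $\bfun=\sum_k \pm\cfunction{E_k}$, or more precisely $\bfun=\cfunction{G}$ for $G=\bigcup_k E_k$. Here $E_k$ is a union of tiny balls of radius $\delta_k$ placed on a fine grid of mesh $\eta_k\gg\delta_k$, with total measure small, say $2^{-k}$, and $\delta_k,\eta_k$ decreasing very rapidly. For $y$ with $y(\dimn)\approx\delta_k$ lying nontangentially over a ball of $E_k$, the Poisson integral satisfies $\PoissonI{\cfunction{E_k}}(y)\ge c(\angolo)>0$. Near the boundary, however, the nontangential limit of $\dfunction$ is a.e.\ $0$ off $G$, and $|G|$ is small. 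Using firmness at scale $r\sim\eta_k$, for a.e.\ $\bpoint\in F_\angolo\setminus G$ we seek a $\bpointv\in E_k\cap\ball{\bpoint}{\rho}$ and a $y\in\ballrel{\bpoint}{r}$ covering it with $y(\dimn)\approx\delta_k$. The scale matching is forced by tangentiality: we choose $\delta_k$ adapted to the tangential rate on the uniformized subset, so that $y(\dimn)$ is comparable to $\delta_k$ along a subsequence. Then $\limsup_{\af(\bpoint)}\dfunction\ge c(\angolo)/2$ while the nontangential limit is $0$, so the limit along $\af(\bpoint)$ fails to exist at $\bpoint$. A contributing set's influence from later layers is controlled by making later $E_j$ sparse, and earlier layers are handled by continuity of their Poisson integrals at small scales away from their supports.

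\textbf{Main obstacle.} The hard step is the scale problem. Firmness gives some $y$ in the tail covering each nearby $\bpointv$, but gives no control over the height $y(\dimn)$. A point $y$ whose height is much larger than $\delta_k$ sees only an averaged, small value of $\cfunction{E_k}$. I would handle this with a pigeonhole over dyadic heights combined with a probabilistic or Borel--Cantelli choice of random translates of the grids $E_k$. For fixed $\bpoint$, the $\rho$-neighbourhood is covered by the shadows $\ball{\bar y}{(1+\angolo)y(\dimn)}$ of tail points. Choosing a dyadic height level whose shadows cover a fixed proportion of $\ball{\bpoint}{\rho}$, a random translate of a grid at that level meets such a shadow with probability bounded below. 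Fubini over $\bpoint$, which is legitimate by regularity, then gives failure of convergence for a.e.\ $\bpoint$ with probability one, after summing over $k$ with independent translates. Finally we select one good realization.
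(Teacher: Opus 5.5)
Your overall architecture matches the paper's: the Poisson integral of the indicator of a sparse union of tiny grid cells, a Carleson-tent lower bound for the $\limsup$, and Egorov to make the tangency rate uniform. However, the step that carries the theorem is missing. At the end of Step~2 you go from ``$\limsup$ along $\af(\bpoint)$ is $\ge c(\angolo)/2$'' and ``the nontangential limit is $0$'' to ``the limit along $\af(\bpoint)$ does not exist''. That does not follow. Because $\af(\bpoint)$ is tangential, for each aperture $\angolo$ the tail $\ballrel{\bpoint}{r}$ is disjoint from $\Gamma_\angolo(\bpoint)$ once $r$ is small. So the nontangential limit at $\bpoint$ says nothing about $\dfunction$ on $\af(\bpoint)$, and a priori the limit along $\af(\bpoint)$ could exist and be $\ge c(\angolo)/2$.

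You must prove separately that the $\liminf$ along $\af(\bpoint)$ is $0$ for a.e.\ $\bpoint$ off the open set. This is the second, essential use of firmness in the paper (Lemma~\ref{l:C1950}). Egorov applied to the truncated nontangential maximal functions $\bfunp_j$ of $\PoissonI{\cfunction{\Vb}}$ gives large perfect sets on which they tend to $0$ uniformly. For $\bopoint$ in such a set, firmness puts points of every tail of $\af(\bopoint)$ into $\Gamma_{b_0}(\bpoint_r)$ for some $\bpoint_r\neq\bopoint$ in the same set, close to $\bopoint$, where $\PoissonI{\cfunction{\Vb}}<\epsilon$.

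Read literally, \eqref{e:firmONE} allows $\bpointv=\bpoint$, which would put tail points inside $\Gamma_\angolo(\bpoint)$ and rescue your inference. But it would then give $\tau(\bpoint,y)>1/(1+\angolo)$ at points $y$ of every tail, contradicting (1), so the theorem would be vacuous. The meaningful reading has $\bpointv\neq\bpoint$, which is the only case the paper's arguments use, and under it your step fails.

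There is a second structural gap. With $\bfun=\cfunction{G}$ and $G$ open of positive measure, $\PoissonI{\bfun}\to1$ unrestrictedly at every point of $G$, so the limit along $\af(\bpoint)$ \emph{exists} on $G$. To reach ``a.e.'' you need Zygmund's series $\sum_j s^{-j}\cfunction{\Vb_j}$ over decreasing open sets with $|\bigcap_j\Vb_j|=0$. At $\bpoint\notin\Vb_j$ with $\bpoint\in\Vb_k$ for $k<j$:
\begin{itemize}
\item the earlier terms are continuous at $\bpoint$, because $\bpoint$ lies \emph{inside} their open supports (not away from them);
\item the $j$-th term oscillates by at least $s^{-j}c_0$;
\item the remaining terms oscillate by at most $s^{-j}/(s-1)<s^{-j}c_0$.
\end{itemize}

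On the scale problem, you identify the right difficulty, but the pigeonhole/random-translate remedy fails as stated. There are infinitely many dyadic height levels, so no single one need cover a fixed proportion of $\ball{\bpoint}{\rho}$. Also, the chosen level depends on $\bpoint$ while the grids are fixed in advance. The remedy is also unnecessary. The missing observation is that tangentiality itself caps the height, and low points are harmless because Lemma~\ref{l:CarlesonTent} holds on the whole tent $\boldsymbol{\Delta}(\bb)$. Concretely, if $y\in\ballrel{\bpoint}{r}\cap\Gamma_\angolo(\bpointv)$ and $\tau(\bpoint,y)\le\kappa$ with $(1+\angolo)\kappa\le\frac{1}{10}$, then $|y-\bpointv|<(1+\angolo)\kappa|y-\bpoint|$, hence $|y-\bpoint|<\frac{10}{9}|\bpointv-\bpoint|$.

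The paper uses this as follows:
\begin{itemize}
\item It fixes $r$, obtains $\rho$ from firmness pointwise, takes a grid of mesh $\varphi_j^{-1}\ll\rho$ with $v_j=1+\angolo$, and lets $\bpointv=\vob$ be the grid point nearest to $\bpoint$.
\item Then $y\in\ball{\bpoint}{\frac{3\sqrt{\dprime}}{4\varphi_j}}$. There the tangency gauge $\bfun_{\varphi_j}$ has been made $\lesssim 2^{-j}/v_j$ uniformly on $\Cb_j$ by Egorov; measurability of $\bfun_n$ is exactly what regularity supplies.
\item Hence $y(\dimn)\lesssim 2^{-j}\varphi_j^{-1}/(1+\angolo)$ and $|y-\vob|\lesssim 2^{-j}\varphi_j^{-1}$, so $y$ lies in a Carleson tent over a boundary ball centred at $\vob$ and contained in $\Ob_j$.
\end{itemize}
So the order of scales is $r$, then $\rho(r)$, then mesh $\ll\rho$, not $r\sim\eta_k$. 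No uniformity of $\rho$ in $\bpoint$ is needed, and all apertures are handled at once through $v_j$.

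Your reduction to a fixed $\angolo$ is problematic in any case. Regularity does not evidently make $F_\angolo$ or $\rho(\cdot)$ measurable. Moreover, functions built separately for different $\angolo$ cannot simply be added, since their oscillations may cancel.
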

We emphasize the fact that, while the previously known theorem of Littlewood type 
apply to systems of tangential approach regions that are \textit{curvilinear} \cite{Littlewood1927} and 
\cite{Aikawa1990} (for $n=2$), \cite{Aikawa1991} (for $n\geq2$), \cite{Zygmund1949} (for $n=2$), 
or share with them a topological property that \textit{excludes} the possibility 
that they may be \textit{sequential} 
\cite{DiBiaseStokolosSvenssonWeiss2006} (for $n=2$), 
our result also applies to tangential approach regions that are \textit{sequential}. 
The following notation will be useful. 

\noindent
$\baffz$ is the set of $\af\in\car$ that satisfy the condition (2) 
in Theorem~\ref{thm:maintheorem}.

\noindent 
$\bafcz$ is the set of all 
systems of \textit{curvilinear} approach regions in $\hd$, i.e., such that 
$\forall$ $\bpoint\in\bhd$ 
there exists 
a continuous function $\varphi_{\bpoint}:[0,1)\to\hd$ and 
$r_{\bpoint}>0$ such that 
$\displaystyle{\lim_{s\uparrow1}\varphi_{\bpoint}(s)=\bpoint}$ 
and 
${\ballrel{\bpoint}{r_{\bpoint}}=\ball{\bpoint}{r_{\bpoint}}
 \cap\{\varphi_{\bpoint}(s):0\leq s<1\}}$.  

\noindent 
$\bafsz$ is the set of all 
systems of \textit{sequential} approach regions in $\hd$, i.e., such that 
such that 
$\forall$ 
$\bpoint\in\bhd$ 
$\exists$ 
$r_{\bpoint}>0$ 
and there exists a sequence 
$\varphi_{\bpoint}:\NN\to\hd$
such that 
$\displaystyle{\lim_{j\to+\infty}\varphi_{\bpoint}(j)=\bpoint}$
and
$\displaystyle{\ballrel{\bpoint}{r_{\bpoint}}=\ball{\bpoint}{r_{\bpoint}}
 \cap\{\varphi_{\bpoint}(j):j\in\NN\}}$.

\noindent Then 
\begin{equation}
\text{(i)}\quad\baffz\cap\bafcz\not=\emptyset;\quad \text{(ii)}
\quad\baffz\cap\bafsz\not=\emptyset;
\label{e:good}
\end{equation}
and
$$
\bafsz\cap\baftz\cap\baffz\not=\emptyset\text{ and }
(\bafsz\cap\baftz)\setminus\baffz\not=\emptyset
$$
The property of \textit{firmness},  
introduced in the present work, should be compared 
to the notion 
of being \textit{attached}, 
introduced in \cite[p. 48]{DiBiaseStokolosSvenssonWeiss2006} for $n=2$ and therein denoted (c$\star$). If we denote by $\bafaz$ the collection of all systems of 
\textit{attached} approach regions in $\hd$, then  
\begin{equation}
\text{(i)}\quad\bafcz\subsetneq\bafaz\quad\text{ but }\quad \text{(ii)}\quad\bafaz\cap\bafsz=\emptyset 
\label{e:attachednew} 
\end{equation}
It is useful to compare \eqref{e:attachednew} (ii) with~\eqref{e:good} (ii).

\subsection{Further background results}

We now briefly give other background results that also help us assess the new contribution of this work and put it in context. 
 
In 1949, A. Zygmund \cite{Zygmund1949} gave a real-variable proof of Littlewood's theorem, that may be extended to $\hd$ so as to yield the following result: If 
$\af$ is translation invariant system of approach regions and, for each $\bpoint\in\bdryhd$, 
$\af(\bpoint)$ is a hypersurface of dimension $n-1$ tangential to $\bdryhd$, then 
there exists a bounded harmonic function $\dfunction$ on $\hd$ 
such that~\eqref{e:lvdne} holds.

In 1991, H. Aikawa \cite{Aikawa1991} proved that if $\af\in\baftz\cap\bafcz$ and  
the curves ${\{\af(\bpoint)\}}_{\bpoint\in\bdryhd}$ are uniformly bi-Lipschitz equivalent,  then there exists a bounded harmonic function $\dfunction$ on $\hd$ 
such that~\eqref{e:lvdne} holds.

In 2006, the following statement 
was proved to be independent of ZFC \cite{DiBiaseStokolosSvenssonWeiss2006}: 
\begin{quote}
\flqq There is no $\af\in\baftz\cap\bafcz$ such that each bounded harmonic function on 
$E_{2}^{+}$ converges along $\af$ to its nontangential boundary values.\frqq
\end{quote}
The independence of the statement quoted above means that it is neither possible to prove it, nor to disprove it, within ZFC. 

The notions and the results of logic that are employed in the proof of this 
result may be found in \cite{Bourbaki.Insiemi},
\cite{Bourbaki},
\cite{Bourbaki2007}, \cite{Drake1974}, \cite{Jech1978},
\cite{Kunen1980}, \cite{TomaszWeiss2020}.

In the context of NTA domains, a theorem of Fatou type has been proved in 
\cite{JerisonKenig1982}, 
and a theorem of Nagel-Stein type has been proved in \cite{DiBiase1998}. 
There is currently no result of Littlewood type for NTA domains; The approach outlined in this paper should yield such a result, and we hope to return to this matter 
in the near future. 

\subsection{The geometric meaning of the notion of firmness}

The following notions will illustrate the geometric meaning of the notion of firmness.
If $X$ is a set then $\tps{X}$ is the collection of all subsets of $X$. 

If $\af$ is a system of approach regions in $\hd$ 
and $U\subset\hd$ then the set 
$\left\{\bpoint\in\bdryhd: U \cap \af(\bpoint) \neq \emptyset  \right\}$ is denoted by 
$\af^{\ast}(U)$ and called the $\af$-shadow of $U$. 
The map  $\af^{\ast}:\tps{\hd}\to\tps{\bdryhd}$ thus defined is called 
the \textit{projection} associated to $\af$.
Hence a system of approach regions $\af$ in $\hd$ is regular if 
the projection associated to $\af$ maps each open subset of $\hd$ to a measurable 
subset of $\bhd$. 

If $\angolo\in\NN$ and $\bpointv\in\bdryhd$ 
then $\Gamma_{\angolo}(\bpointv)\eqdef\{y\in\hd: |\bpointv-y| < (1+\angolo) y(\dimn)\}$;
$\Gamma_{\angolo}(\bpointv)$ is called the 
\textit{nontangential approach region at} $\bpointv$ of \textit{aperture} $\angolo$; 
cf. \cite{Stolz1875} and \cite{SteinWeiss1971}. Observe that $\Gamma_{\angolo}$ is a system of approach regions in $\hd$.  

If $\bpoint\in\bhd$ and $r>0$ the set 
$\acal{\bpoint}{r}\eqdef\bhd\cap\ball{\bpoint}{r}$ is called 
a \textit{boundary ball in} $\RR^{\dimn}$ of center $\bpoint$. 
The collection of all boundary balls in $\hd$ is denoted $\sobb$. 
Hence~(2) in Theorem~\ref{thm:maintheorem} means that $\exists\,\angolo\in\NN$ such that $\forall r>0$ 
the $\Gamma_b$-shadow of $\ballrel{\bpoint}{r}$ contains 
a boundary ball of center $\bpoint$.

\section{Structure of the proof  of Theorem~\ref{thm:maintheorem}}
\label{s:proofofthemaintheorem}

Throughout this section, we assume, without further notice, 
that $\af$ satisfies the hypotheses of Theorem~\ref{thm:maintheorem}.

The following notions or notation will be used in the proof of 
Theorem~\ref{thm:maintheorem}, presented in 
Section~\ref{s:proofofmainresult}.  
 If $\dfunction:\hd\to\RR$, the \textbf{Fatou set of} $\dfunction$ is defined as 

$$
\Fatouset{\dfunction}
\eqdef
\{\bpoint\in\bhd: 
\exists\ell\in\RR
\text{ such that }
\lim_{\stackrel{\text{nt}}{z\to\bpoint}}\dfunction(z)=\ell\},
$$ 
where the notation 
${\displaystyle{\lim_{\stackrel{\text{nt}}{z\to\bpoint}}\dfunction(z)=\ell}}$ means 
that $\forall\angolo\in\NN$ and $\forall\epsilon>0$ 
$\exists\delta>0$ such that if $z\in\ball{\bpoint}{\delta}\cap\Gamma_{\angolo}(\bpoint)$ 
then $|\dfunction(z)-\ell|<\epsilon$. 

Denote by $\hinfty$ the space of real-valued and 
harmonic functions 
defined on $\hd$ and bounded therein. 
If $\dfunction\in\hinfty$ then $\Fatouset{\dfunction}$
has 
full measure in $\bhd$ (see \cite{Zygmund1988} and references therein), 
and the  
\textbf{nontangential boundary function of} $\dfunction$ is 
the function 
$\ntbv{\dfunction}:\Fatouset{\dfunction}\to\RR$, defined by 
$$
\ntbv{\dfunction}(\bpoint)\eqdef
\lim_{\stackrel{\text{nt}}{z\to\bpoint}}\dfunction(z). 
$$
%\section{Proof of Theorem~\ref{t:mainresult1}}\label{s:Proof}
%The following constructions will be used in the proof.
If $\dfunctionharm \in {\hinfty}$ and $\bpoint \in \bdryhd$, we define
$$
\osc{\dfunctionharm}{\bpoint}{\af}\eqdef 
\limsup_{\stackrel{\dpoint\to\bpoint}{\dpoint\in\af(\bpoint)}} 
\dfunctionharm(\dpoint)
-
\liminf_{\stackrel{\dpoint\to\bpoint}{\dpoint\in\af(\bpoint)}} 
\dfunctionharm(\dpoint)
$$
%In order to prove Theorem~\ref{t:mainresult1}, it suffices to prove 
%the following statement
%\begin{equation}
%\text{there exists }
%\dfunctionharm\in{\hinfty}
%\text{ with } 
%\dfunctionharm>0
%\text{ and } 
%\osc{\dfunctionharm}{\bpoint}>0
%\text{ for a.e. } 
%\bpoint\in\botud
%\end{equation}
%Indeed, 
%if $\dfunctionharmc$ is the harmonic conjugate to $\dfunctionharm$ then 
%$\dfunctionhol\eqdef{}e^{-\dfunctionharm-i\dfunctionharmc}$ will have the required properties. 

\subsection{Carleson tents and the Zygmund map}

If $\bb=\acal{\bpoint}{r}\subset\bhd$ is a boundary ball, 
the \textbf{Carleson tent above} $\bb$
is defined by 
\begin{equation}
\boldsymbol{\Delta}(\bb)\eqdef
\hd\cap\ball{\bpoint}{r}\subset\hd.
\label{e:Delta} 
\end{equation}
%\begin{equation}
%\boldsymbol{\Delta}(\acal{\bpoint}{\theta})\eqdef
%\hd\cap\ball{\bpoint}{|\yb-\yb{}e^{i\frac{\theta}{2}}|}.
%\label{e:Delta} 
%\end{equation}
The \textbf{Zygmund map} is the function
$\Zb:\tps{\bdryhd}\to\tps{\bdryhd}$
defined as follows: If $\Vb\subset\bdryhd$ then 
$\Zb(\Vb)\eqdef\{\bpoint\in\bdryhd: \text{\textup{{(i)}}}\,\&\,\text{\textup{{(ii)}}}\text{ hold}\}$, where 
\begin{equation} 
\begin{aligned}
&\text{\textup{(i)}}\,\,  \bpoint\in\bdryhd\setminus\Vb;
\\
&\text{\textup{(ii)}}\,\,
\forall\epsilon>0
\,\,
\exists 
\, \bb\in\sobb:
\bb\subset\Vb\cap\ball{\bpoint}{\epsilon}
\,\&\,
\af(\bpoint)\cap\boldsymbol{\Delta}(\bb)\not=\emptyset.
\\
\end{aligned}
\label{e:Zygmund}
\end{equation}

\subsection{Poisson integrals and the basic Carleson tent estimate}

We denote by $|\Sb|$ the \textbf{Lebesgue measure} 
of a measurable subset $\Sb\subset\bhd$. 

A \textbf{null set} in $\bhd$ is a measurable subset 
$\Ub\subset\bhd$ with $|\Ub|=0$,
 and we set $\nullsets\eqdef\{\Ub\in\tps{\bhd}:
\Ub\text{ is a null set in } \bhd\}$. 
If $\Sb$ and $\Qb$ are subsets of $\bhd$ we say that 
$\Sb$ is \textbf{a.e. contained in} $\Qb$, and write
${\aesubset{\Sb}{\Qb}}$
if $\Sb\setminus\Qb\in\nullsets$: 
This means that almost all of $\Sb$ is a subset of $\Qb$. Observe that 
the sets $\Sb$ and $\Qb$ need not be measurable
and that 
\begin{equation}
\aesubset{\Ab\setminus\Bb}{\Cb}
\iff
\aesubset{\Ab\setminus\Cb}{\Bb}.
\label{e:aesubsetNEW} 
\end{equation}
If $\aesubset{\Sb}{\Qb}$ and $\aesubset{\Qb}{\Sb}$,  
we say that $\Sb$ and $\Qb$ are \textbf{almost everywhere equal}, 
and write 
${\aeequal{\Sb}{\Qb}}$.
A set $\Sb\subset\bhd$ 
has 
{\bf full measure} 
if $\aeequal{\Sb}{\bhd}$. 
A property is said to hold 
\textbf{a.e.}  
if the set of points 
in $\bhd$ 
for which it holds has full measure.

If $\bfunction:\bdryhd\to\RR$ is Lebesgue integrable,
we denote by $\PoissonI{\bfunction}:\hd\to\RR$ its 
\textbf{Poisson integral}; cf. \cite{SteinWeiss1971}. 

If $\Sb\subset\bdryhd$ then 
$\cfunction{\Sb}:\bdryhd\to\RR$ 
is defined by 
$\cfunction{\Sb}(\bpoint)=1$
if 
$\bpoint\in\Sb$,
$\cfunction{\Sb}(\bpoint)=0$
if 
$\bpoint\not\in\Sb$.

The following result is well-known (cf.\ \cite{Calderon1950}, 
\cite{SteinWeiss1971}). 
The constant $c_0>0$ that appears therein is called 
the \textbf{Carleson tent constant}. 
\begin{lemma} 
There exists a constant $c_0\in(0,+\infty)$ such that 
for each boundary ball $\bb\subset\bdryhd$ and each $\dpoint\in\boldsymbol{\Delta}(\bb)$, 
$\PoissonI{
\cfunction{\bb}}
(\dpoint)\geq{}c_0$.  
\label{l:CarlesonTent}
\end{lemma}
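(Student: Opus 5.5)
The plan is to use the explicit formula for the Poisson kernel of $\hd$ together with scaling and translation invariance, so that the estimate reduces to a single normalized configuration. Recall
$$
\PoissonI{\cfunction{\bb}}(\dpoint)=\int_{\bb} c_\dimn\,\frac{t}{(|x-u|^2+t^2)^{\dimn/2}}\,du,
\qquad \dpoint=(x,t),\ t>0,
$$
where $u$ ranges over $\RR^{\dprime}$ (identified with $\bhd$) and $c_\dimn$ is the normalizing constant.

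First, I would reduce to a normalized ball. For $\bb=\acal{\bpoint}{r}$, the affine map $y\mapsto (y-\bpoint)/r$ preserves $\hd$ and $\bhd$. It sends $\bb$ to $\acal{0}{1}$ and $\boldsymbol{\Delta}(\bb)$ to $\boldsymbol{\Delta}(\acal{0}{1})$. The Poisson integral commutes with these dilations and translations, since the kernel is homogeneous of degree $-\dprime$ and the measure $du$ scales by $r^{\dprime}$. It therefore suffices to bound $\PoissonI{\cfunction{\acal{0}{1}}}$ from below on $\hd\cap\ball{0}{1}$ by a constant $c_0>0$.

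Second, I would prove the bound on the half unit ball. Let $\dpoint=(x,t)$ with $|x|^2+t^2<1$, $t>0$. I would use the sub-ball $\bb'=\bhd\cap\ball{(x,0)}{t}$ and consider two cases.

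\textbf{Case $\bb'\cap\acal{0}{1}$ is large.} The set $\acal{0}{1}\cap\ball{(x,0)}{t}$ contains a boundary ball of radius comparable to $\min(t,1)\ge$ a fixed fraction of $t$. This holds because $|x|<1$ and $t<1$: the ball centered at the point $x\cdot(1-t/2)$ (moved slightly toward the origin) of radius $t/4$ works. On that set the kernel is at least $c_\dimn t/(2t^2)^{\dimn/2}$. Integrating over a set of measure $\gtrsim t^{\dprime}$ gives a lower bound $\gtrsim t\cdot t^{\dprime}/t^{\dimn}=$ const.

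\textbf{Case $t$ small relative to nothing else.} The first case already covers everything, since the geometry argument only needs $|x|<1$ and $0<t<1$.

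The only point needing care is the elementary geometric claim: for $|x|<1$ and $0<t<1$, the intersection $\acal{0}{1}\cap\acal{x}{t}$ contains a boundary ball of radius $t/4$ whose points $u$ satisfy $|x-u|\le t$. I would take the center $x'=(1-t/2)x$ when $|x|\ge 1/2$, and $x'=x$ otherwise, and check the two containments by the triangle inequality. With this, $c_0$ depends only on $\dimn$, and the lemma follows.
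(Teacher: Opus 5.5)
Your proof is correct. The paper gives no proof of this lemma; it quotes it as classical, citing Calder\'on and Stein--Weiss. So there is no argument in the paper to compare against, and what you give is the standard self-contained proof.

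Both ingredients check out.
\begin{itemize}
\item The map $y\mapsto(y-\bpoint)/r$ leaves $\PoissonI{\cfunction{\bb}}$ unchanged. The kernel $c_\dimn t\,(|x-u|^2+t^2)^{-\dimn/2}$ is homogeneous of degree $-\dprime$, while $du$ scales by $r^{\dprime}$.
\item Take $(x,t)\in\hd\cap\ball{0}{1}$ and the boundary ball of center $x'=(1-t/2)x$ and radius $t/4$. Every point $u$ of it satisfies $|u-x|<3t/4$ and $|u|<1-t/4$. Hence it lies in $\acal{0}{1}$, and on it the kernel exceeds $c_\dimn 2^{-\dimn/2}t^{-\dprime}$.
\end{itemize}
This gives the explicit constant $c_0=c_\dimn\,2^{-\dimn/2}\,4^{-\dprime}\,\omega_{\dprime}$, where $\omega_{\dprime}$ is the volume of the unit ball in $\RR^{\dprime}$.

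For the write-up:
\begin{itemize}
\item Delete the vacuous second ``case''.
\item Drop the split at $|x|=1/2$. The center $(1-t/2)x$ works for every $|x|<1$: the bounds $|u-x|<t/4+(t/2)|x|<3t/4$ and $|u|<(1-t/2)+t/4$ hold throughout.
\item State explicitly that $\PoissonI{\cfunction{\bb}}\geq\PoissonI{\cfunction{\bb''}}$ whenever $\bb''\subset\bb$, by positivity of the kernel.
\end{itemize}

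As an aside, for $\dimn=2$ there is an exact formula: $\PoissonI{\cfunction{(a,b)}}(x,t)=\theta/\pi$, where $\theta$ is the angle under which $(x,t)$ sees $(a,b)$. Thales' theorem then gives $c_0=1/2$ on the half-disc tent. Your kernel comparison needs no explicit formula and works in every dimension.
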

%\begin{proof}
%See e.g. Lemma~3.6 in \cite{DiBiaseStokolosSvenssonWeiss2006}. 
%\end{proof}

\subsection{The limsup lemma}
\label{s:limsup}

\begin{lemma}
If $\Vb \subset \bdryhd$ is open then, for all $\bpoint \in \Zb(\Vb)$,
$$
\limsup_{\stackrel{\dpoint\to\bpoint}{\dpoint\in\af(\bpoint)}}  
\PoissonI{\cfunction{\Vb}}(\dpoint)\geq c_0 
$$
\label{l:Carleson}
\end{lemma}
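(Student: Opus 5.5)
The plan is to produce, for each $\epsilon>0$, a point $\dpoint_\epsilon\in\af(\bpoint)\cap\ball{\bpoint}{\epsilon}$ with $\PoissonI{\cfunction{\Vb}}(\dpoint_\epsilon)\geq c_0$. Since $\dpoint_\epsilon\to\bpoint$ along $\af(\bpoint)$, this forces the $\limsup$ in the statement to be at least $c_0$.

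First, fix $\bpoint\in\Zb(\Vb)$ and $\epsilon>0$. By clause (ii) in the definition \eqref{e:Zygmund} of the Zygmund map, there is a boundary ball $\bb=\acal{\bpointv}{s}\in\sobb$ with $\bb\subset\Vb\cap\ball{\bpoint}{\epsilon}$ and a point $\dpoint_\epsilon\in\af(\bpoint)\cap\boldsymbol{\Delta}(\bb)$.

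Next, we check that $\dpoint_\epsilon$ is close to $\bpoint$. We have $\dpoint_\epsilon\in\ball{\bpointv}{s}$, and since $\bb\subset\ball{\bpoint}{\epsilon}$ we get $|\bpointv-\bpoint|+s\le 2\epsilon$ (or a comparable bound). Hence $|\dpoint_\epsilon-\bpoint|<|\dpoint_\epsilon-\bpointv|+|\bpointv-\bpoint|<s+|\bpointv-\bpoint|\le 2\epsilon$. This is a simple triangle-inequality step; replacing $\epsilon$ by $\epsilon/2$ if needed puts $\dpoint_\epsilon$ inside $\ball{\bpoint}{\epsilon}$. The only care needed is to use that the center of $\bb$ lies in $\ball{\bpoint}{\epsilon}$ and that $s$ is bounded in terms of $\epsilon$. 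Both follow from $\bb\subset\ball{\bpoint}{\epsilon}$, because a boundary ball of radius $s$ contains points at distance nearly $s$ from its center in every boundary direction.

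Then we bound the Poisson integral from below. Since $\bb\subset\Vb$ and the Poisson kernel is positive, $\cfunction{\Vb}\geq\cfunction{\bb}$ pointwise, so $\PoissonI{\cfunction{\Vb}}\geq\PoissonI{\cfunction{\bb}}$. Lemma~\ref{l:CarlesonTent} gives $\PoissonI{\cfunction{\bb}}(\dpoint_\epsilon)\ge c_0$ because $\dpoint_\epsilon\in\boldsymbol{\Delta}(\bb)$.

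Finally, letting $\epsilon\to0$, we obtain points of $\af(\bpoint)$ converging to $\bpoint$ at which $\PoissonI{\cfunction{\Vb}}\ge c_0$, and the $\limsup$ bound follows.

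There is no serious obstacle. The openness of $\Vb$ serves only to make $\cfunction{\Vb}$ measurable, so that the Poisson integral is defined. The one point to state carefully is the geometric inclusion $\boldsymbol{\Delta}(\bb)\subset\ball{\bpoint}{2\epsilon}$.
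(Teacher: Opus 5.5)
Your proof is correct and follows the paper's argument. The paper's one-line proof combines clause (ii) of the Zygmund map, the monotonicity $\PoissonI{\cfunction{\bb}}\leq\PoissonI{\cfunction{\Vb}}$ for $\bb\subset\Vb$, and Lemma~\ref{l:CarlesonTent}, leaving implicit the localization step you spell out. In fact, since $\bpoint$ and the center $\bpointv$ of $\bb$ both lie in $\bhd$, the inclusion $\bb\subset\ball{\bpoint}{\epsilon}$ already gives $|\bpointv-\bpoint|+s\leq\epsilon$, so $\boldsymbol{\Delta}(\bb)\subset\ball{\bpoint}{\epsilon}$ and you do not need to halve $\epsilon$.
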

\begin{proof}
It follows from Lemma~\ref{l:CarlesonTent}, since if $\bb \subset \Vb$ then 
$\PoissonI{\cfunction{\bb}}\leq\PoissonI{\cfunction{\Vb}}$.
\end{proof}

\subsection{The liminf lemma}
\label{s:liminf}

\begin{lemma}
If $\Vb \subset \bhd$ is open and 
$|\bhd \setminus \Vb| > 0$, then 
\begin{equation}
\aesubset{\bhd \setminus \Vb}{
\left\{\bpoint\in\bhd:\liminf_{\stackrel{\dpoint\to\bpoint}{\dpoint\in\af(\bpoint)}} 
(\PoissonI{\cfunction{\Vb}})(\dpoint) =0\right\}}
\label{e:C1950}
\end{equation}
\label{l:C1950}
\end{lemma}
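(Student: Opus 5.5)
The plan is to combine Fatou's theorem for $\PoissonI{\cfunction{\Vb}}$ with the firmness hypothesis (2) of Theorem~\ref{thm:maintheorem}, via an Egorov-type uniformization and a density-point argument. Regularity of $\af$ and tangentiality will not be needed. Set $\Sb\eqdef\bhd\setminus\Vb$, which is closed. By the classical Fatou theorem, $\PoissonI{\cfunction{\Vb}}$ has nontangential limit $\cfunction{\Vb}$ at almost every point of $\bhd$. Hence for a.e. $\bpointv\in\Sb$ the nontangential limit is $0$ for every aperture. To quantify this, for $\angolo\in\NN$, $r>0$ and $\bpointv\in\bhd$ I would define
$$
g^{\angolo}_{r}(\bpointv)\eqdef\sup\{\PoissonI{\cfunction{\Vb}}(y): y\in\Gamma_{\angolo}(\bpointv)\cap\ball{\bpointv}{r}\}.
$$
Since $\Gamma_{\angolo}(\bpointv)\cap\ball{\bpointv}{r}$ is open and moves continuously with $\bpointv$, and the Poisson integral is continuous, each $g^{\angolo}_r$ is lower semicontinuous, hence measurable. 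Moreover $g^{\angolo}_r(\bpointv)$ is monotone in $r$, and $g^{\angolo}_{r}(\bpointv)\to0$ as $r\downarrow0$ for a.e. $\bpointv\in\Sb$ and every $\angolo$. Taking $r$ along $1/k$ suffices, by monotonicity.

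Next comes the uniformization. Fix $\epsilon>0$. By Egorov's theorem applied on bounded pieces of $\Sb$ (or after intersecting with a large ball, then exhausting), for each $\angolo\in\NN$ there is a measurable $\Qb_{\angolo}\subset\Sb$ with $|\Sb\setminus\Qb_{\angolo}|<\epsilon 2^{-\angolo}$ on which $g^{\angolo}_{r}\to0$ uniformly as $r\downarrow0$. Put $\Qb\eqdef\bigcap_{\angolo}\Qb_{\angolo}$. Then $|\Sb\setminus\Qb|<\epsilon$, and on $\Qb$ each $g^{\angolo}_r$ converges to $0$ uniformly, with the rate allowed to depend on $\angolo$. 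By the Lebesgue density theorem, almost every $\bpoint\in\Qb$ is a density point of $\Qb$. In particular, for every $\rho>0$ the boundary ball $\acal{\bpoint}{\rho}$ meets $\Qb$.

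The key geometric step is to fix such a density point $\bpoint$ and let $\angolo$ be given by hypothesis (2), so that $\af(\bpoint)$ is $\angolo$-firm at $\bpoint$. Given $r>0$, firmness provides $\rho>0$, which we may shrink so that $\rho\le r$. Pick $\bpointv\in\Qb\cap\acal{\bpoint}{\rho}$; then \eqref{e:firmONE} produces $y\in\ballrel{\bpoint}{r}$ with $y\in\Gamma_{\angolo}(\bpointv)$. Since $|y-\bpointv|\le|y-\bpoint|+|\bpoint-\bpointv|<2r$, we get $\PoissonI{\cfunction{\Vb}}(y)\le g^{\angolo}_{2r}(\bpointv)\le\sup_{\Qb}g^{\angolo}_{2r}$. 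The right-hand side tends to $0$ as $r\to0$, so points of $\af(\bpoint)$ arbitrarily close to $\bpoint$ carry arbitrarily small values, and the $\liminf$ equals $0$ because $\PoissonI{\cfunction{\Vb}}\ge0$. Thus, outside a null set, every point of $\Qb$ lies in the set on the right of \eqref{e:C1950}. Letting $\epsilon=1/m$ and taking the union over $m$ shows that $\Sb$ minus that set is contained in a null set. This is exactly the statement $\aesubset{\bhd\setminus\Vb}{\{\cdots\}}$, and it requires no measurability of the target set.

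The main obstacle is the mismatch between the tangential region $\af(\bpoint)$ and the nontangential convergence available only at other points $\bpointv$. Firmness lets us transfer to cones at nearby $\bpointv$, but only for some $\bpointv$ near $\bpoint$ that we do not control. This is why the Egorov uniformization over $\Qb$, together with choosing $\bpoint$ a density point so that such $\bpointv$ can be taken in $\Qb$, is essential. A secondary point is that the aperture $\angolo$ depends on $\bpoint$; this is handled by intersecting the Egorov sets over all $\angolo$ with summable error.
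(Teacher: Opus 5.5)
Your proof is correct and follows essentially the paper's route: Fatou's theorem, an Egorov uniformization covering all apertures at once, and firmness applied at a nearby good boundary point $\bpointv$ to place small values of $\PoissonI{\cfunction{\Vb}}$ in $\af(\bpoint)\cap\ball{\bpoint}{r}$. The differences are only technical. You intersect per-aperture Egorov sets and use density points, where the paper applies Egorov to $\sum_b 2^{-b}\bfung_{(j,b)}$ and takes perfect Egorov sets; what density really buys you is a good $\bpointv\neq\bpoint$ in your $\Qb\cap\acal{\bpoint}{\rho}$, which is what matters because tangentiality means firmness can only be used at $\bpointv\neq\bpoint$, and uniform convergence is needed only near $\bpoint$, so localizing Egorov to bounded pieces as you indicate correctly handles the infinite-measure case.
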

\begin{proof}
The proof will be given in Section~\ref{s:proofoflemmaC1950} 
\end{proof}

\subsection{Basic constructions}
\label{s:bs}

We may assume,
without loss of generality, that  
%\begin{equation}
%\varpaaw{\angolo}\subset\varpaaw{\angolo+1}
%\label{e:angolo} 
%\end{equation}
\begin{equation}
\angolo(\bpoint)\geq 10\quad\forall\bpoint\in\bhd
\label{e:largeangle} 
\end{equation}
Define 
$\tau : \bhd \times \hd \to (0, 1]$ 
by 
$\tau(\bpoint,\dpoint) \eqdef
\frac{\dpoint(\dimn)}{|\bpoint-\dpoint|}$, 
for 
$\bpoint\in\bhd$, 
$\dpoint\in\hd$, and 
consider the sequence of everywhere defined functions 
$\bfun_j: \bhd \to (0, \infty)$ gauging
the order of tangency of $\af(\bpoint)$ at various points: 
\begin{equation}
\bfun_j(\bpoint) \eqdef \sup\left\{
\tau(\bpoint, \dpoint) :
\dpoint\in\ballrel{\bpoint}{
\frac{3\sqrt{\dprime}}{4j}
}
\right\} 
\label{e:fn}
\end{equation}
Since, for each $\bpoint\in\bhd$, $\af(\bpoint)$ ends tangentially at $\bpoint$, 
the sequence 
${\{\bfun_n(\bpoint)\}}_{n\in\NN}$ decreases to $0$. Moreover, since 
$\af$ is regular, for 
each $n\in\NN$ the function $\bfun_n:\bhd\to(0,1]$ is measurable. 

Define the sequence $v:\NN\to\NN$ whose values are, in the 
natural order:
$$
2,3,2,3,4,2,
3,4,5,2,3,4,5,6,2,3,4,5,6,7, \ldots
$$
Hence for each $\bsegnato\in\NN\setminus\{1\}$, 
$v_j=\bsegnato$ for infinitely many values of $j\in\NN$.

The following observation will be helpful in the applications 
of Egorov's Theorem that will be given momentarily: 
If the sequence $\{\bfun_n\}$ converges uniformly to $0$ on a set $\Ab$ and on a set $\Bb$, then it
converges uniformly to $0$ on 
 $\Ab\cup \Bb$. 
In order to apply 
 Egorov's Theorem \cite{Royden1968bis}, we set 
 $\bb_n\eqdef\acal{{0}}{n}\subset\bhd$ where ${0}\in\bhd$ is the origin 
in $\RR^{\dimn}$, and define $\Xb_1\eqdef\bb_1$, $\Xb_k\eqdef\bb_k\setminus\bb_{k-1}$ if 
$k\geq2$. We now apply Egorov's Theorem to the sequence 
${\left\{\bfun_n\right\}}_n$ on each set $\Xb_k$, 
for each 
 $k\in\NN$, 
 and obtain a sequence ${\{\Sb(i,k)\}}_{i\in\NN}$ of subsets of 
 $\bhd$ such that $\forall k,i\in\NN$
\begin{description}
\item[(i)] 
the sequence ${\left\{\bfun_n\right\}}_n$ converges uniformly to 0 on $\Sb(i,k)$;

\item[(ii)] 
$\Sb(i,k)\subset\Sb(i+1,k)\subset\Xb_k$;

\item[(iii)]  
$|\Xb_k\setminus\Sb(i,k)|<2^{-i}|\Xb_k|$. 

\end{description}
If $\Cb_i\eqdef\bigcup_{j=1}^{i}\Sb(i,j)$, then 
 the sequence ${\left\{\bfun_n\right\}}_n$ converges uniformly to 0 on $\Cb_i$, 
\begin{equation}
\Cb_j \subset \Cb_{j+1}
\quad\forall\, j\in\NN
\label{e:Cisincreasing} 
\end{equation}
and    
 \begin{equation}
\Cb=\bigcup_{i=1}^{\infty}\Cb_i=\bigcup_{i=1}^{\infty}\bigcup_{k=1}^{i}\Sb(i,k)
\stackrel{\text{(a)}}{=}
\bigcup_{i=1}^{\infty}\bigcup_{k=1}^{\infty}\Sb(i,k)
\label{e:dofC} 
\end{equation}
where (a) follows from (ii).  Thus $\forall i,k$, 
$\Xb_k\setminus\Cb\subset\Xb_k\setminus\Sb(i,k)$, and 
hence (iii) implies that $|\Xb_k\setminus\Cb|=0$. Since $\bhd=\bigcup_{k\in\NN}\Xb_k$, 
$\Cb$ has full measure in $\bhd$.  
It follows that there exists a sequence 
$\varphi:\NN\to\NN$ such that, for each $j\in\NN$ 
\begin{equation}
\begin{aligned}
& 
\textup{(i)}
\,\,
\sup \left\{\bfun_{{\varphi_j}}(\vb): \vb \in \Cb_j\right\} 
< c\frac{10}{22}\frac{2^{-j}}{v_j},\quad
\textup{(ii)}
\,\,
\varphi_j>j,
\quad
\textup{(iii)}\,
\varphi_{j+1}>\varphi_j 
\\
&
\text{ (iv) there exists }q_j\in\NN
\text{ such that } q_j/{\varphi_j}=j 
\end{aligned}
\label{e:aenne} 
\end{equation}
where $c$ is a constant that will be chosen momentarily. 

The \textit{cube in} $\bhd$ 
centered at $\bpoint\in\bhd$ of side length $\rho>0$ is defined as follows:
$$
\acube{\bpoint}{\rho}=
\left\{
\bpointv\in\bhd:
\forall i=1,2,\ldots,\dprime, 
|\bpointv(i)-\bpoint(i)|<\frac{1}{2}\rho
\right\}
$$
Observe that $|\acube{\bpoint}{\rho}|=\rho^{\dprime}$. 

If $j\in\NN$ and $j\geq1$, we define $\Ub_j$ as the set of points $\bpoint\in\bhd$ such that for each $i\in\{1,2,\ldots,\dprime\}$ there exists 
$k_i\in\{0,\pm1,\pm2,\ldots,\pm q_j\}$ such that 
$\bpoint(i)=k_i\cdot{}{(\varphi_j)}^{-1}$. 
Observe that
\begin{equation}
\aeequal{\bigcup_{\bpoint\in\Ub_j}\acube{\bpoint}{{(\varphi_j)}^{-1}}}{
\left\{
\bpoint\in\bhd:\forall i=1,2,\ldots,{\dprime}, |\bpoint(i)| < j+\frac{1}{\varphi_j}
\right\}}
\label{e:definitionofy}
\end{equation}
and the cubes $\acube{\bpoint}{{(\varphi_j)}^{-1}}$, which appear in the union in the 
left-hand side of~\eqref{e:definitionofy}, are disjoint.  
We denote by $\Sb_j$ the right-hand side of~\eqref{e:definitionofy} and define 
\begin{equation}
\Ob_j  \eqdef 
\bigcup_{\bpoint\in\Ub_{j}}\acube{\bpoint}{2^{-j}\varphi_j^{-1}}
\label{e:definitionofOn}
 \end{equation}
and
\begin{equation}
\Vb_n\eqdef\bigcup_{j\geq{}n}\Ob_j
\label{e:Vn} 
\end{equation}
Since $|\Ob_j|={(2^{-j})}^{\dprime}|\Sb_{j}|={(2^{-j})}^{\dprime}{{[2(j+\varphi_j^{-1})]}^{\dprime}}$, 
the set $\Vb_n\subset\bhd$ has small measure, because 
$$
\sum_{j}{(2^{-j})}^{\dprime}{{[2(j+\varphi_j^{-1})]}^{\dprime}}<+\infty.
$$
Observe that $\Vb_n$
is open in $\bhd$ and dense in $\bhd$. 

\subsection{The geometric lemma}
\label{s:basiclemmata}

\begin{lemma} For each $n\in \mathbb{ N}$,
$\Cb \setminus \Vb_n \subset \Zb(\Vb_n)$. 
\label{l:ip}
\end{lemma}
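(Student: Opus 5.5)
The plan is to verify the two clauses of \eqref{e:Zygmund} directly. Fix $n$ and $\bpoint\in\Cb\setminus\Vb_n$. Clause (i) holds by assumption, so only clause (ii) needs work: given $\epsilon>0$, produce a boundary ball $\bb\subset\Vb_n\cap\ball{\bpoint}{\epsilon}$ with $\af(\bpoint)\cap\boldsymbol{\Delta}(\bb)\neq\emptyset$. Let $\angolo=\angolo(\bpoint)\geq10$ be a firmness constant for $\af(\bpoint)$, and pick $j_0$ with $\bpoint\in\Cb_{j_0}$. By \eqref{e:Cisincreasing}, $\bpoint\in\Cb_j$ for every $j\geq j_0$.

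\emph{Step 1: a crude a priori tangency bound.} Since $\bfun_m(\bpoint)\downarrow0$, fix $m$ with $\bfun_m(\bpoint)<\frac{1}{10(1+\angolo)}$. Choose $r_0<\epsilon/2$ with $r_0\leq\frac{3\sqrt{\dprime}}{4m}$. Then every $y\in\ballrel{\bpoint}{r_0}$ satisfies $y(\dimn)\leq\frac{1}{10(1+\angolo)}|y-\bpoint|$. Apply \eqref{e:firmONE} with $r=r_0$ to obtain $\rho_0>0$.

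\emph{Step 2: choice of the scale $j$.} Because $v_j$ takes the value $\angolo$ for infinitely many $j$, choose $j\geq\max(n,j_0)$ with $v_j=\angolo$. Also require $\frac{\sqrt{\dprime}}{2\varphi_j}<\rho_0$, that $\frac{\sqrt{\dprime}}{\varphi_j}<\epsilon/2$, and that $|\bpoint(i)|<j$ for all $i$; all of these hold for $j$ large since $\varphi_j>j$. Let $\bpointv\in\Ub_j$ be a grid point nearest to $\bpoint$. It exists because $q_j/\varphi_j=j$, and $|\bpointv-\bpoint|\leq\frac{\sqrt{\dprime}}{2\varphi_j}<\rho_0$. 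Firmness then gives $y\in\af(\bpoint)\cap\ball{\bpoint}{r_0}$ with $|\bpointv-y|<(1+\angolo)y(\dimn)$.

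\emph{Step 3: localizing $y$ and sharpening the bound.} Combine Step 1 with the triangle inequality:
$$|y-\bpoint|\Bigl(1-\tfrac{1}{10}\Bigr)\leq|y-\bpoint|-|\bpointv-y|\leq|\bpointv-\bpoint|\leq\frac{\sqrt{\dprime}}{2\varphi_j},$$
so $|y-\bpoint|\leq\frac{10}{9}\cdot\frac{\sqrt{\dprime}}{2\varphi_j}<\frac{3\sqrt{\dprime}}{4\varphi_j}$. Hence $y\in\ballrel{\bpoint}{3\sqrt{\dprime}/(4\varphi_j)}$, and by \eqref{e:fn} together with \eqref{e:aenne}(i) (using $\bpoint\in\Cb_j$ and $v_j=\angolo$),
$$y(\dimn)\leq\bfun_{\varphi_j}(\bpoint)\,|y-\bpoint|<c\,\frac{10}{22}\,\frac{2^{-j}}{\angolo}\cdot\frac{10}{9}\cdot\frac{\sqrt{\dprime}}{2\varphi_j}.$$
Therefore
$$|\bpointv-y|<(1+\angolo)\,y(\dimn)\leq C\,c\,\sqrt{\dprime}\,\frac{2^{-j}}{\varphi_j}$$
for an absolute constant $C$, using $(1+\angolo)/\angolo\leq 11/10$. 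Now choose $c$ (this is the ``momentarily'' choice) so that $C\,c\,\sqrt{\dprime}\leq\frac12$.

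\emph{Step 4: the ball.} Set $\bb=\acal{\bpointv}{s}$ with $s=\frac{2^{-j}}{2\varphi_j}$. Then $y\in\ball{\bpointv}{s}\cap\hd=\boldsymbol{\Delta}(\bb)$. Moreover $\bb\subset\acube{\bpointv}{2^{-j}\varphi_j^{-1}}\subset\Ob_j\subset\Vb_n$, since $j\geq n$. Finally $\bb\subset\ball{\bpoint}{\epsilon}$ because $|\bpointv-\bpoint|+s<\epsilon$.

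The main obstacle is the apparent circularity between firmness and the tangency estimate. Firmness only places $y$ somewhere in the $r_0$-tail, whereas the fine bound $\bfun_{\varphi_j}$ applies only to points within distance $\frac{3\sqrt{\dprime}}{4\varphi_j}$ of $\bpoint$. The crude, $j$-independent tangency bound of Step 1 breaks this circularity by forcing $y$ close to the grid point, and hence to $\bpoint$, before the fine bound is invoked. Matching $v_j$ with $\angolo$ is what absorbs the aperture factor $1+\angolo$.
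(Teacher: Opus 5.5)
Your proof is correct and follows the paper's argument essentially step for step. You use a crude, $j$-independent tangency bound to force the firmness witness near the grid point, then apply the fine bound $\bfun_{\varphi_j}(\bpoint)$ with $v_j$ matched to the aperture, and finally place the witness in the tent over a small ball about the grid point inside $\Ob_j$. Your choice of radius $\frac12 2^{-j}\varphi_j^{-1}$ together with a smaller $c$ is in fact the right way to finish: the paper's final inclusion $\acal{\vob}{2^{-j}\varphi_j^{-1}}\subset\Ob_j$ fails as written, because a cube of side $\ell$ contains only the ball of radius $\ell/2$, and its constant $c=\frac{22}{10\sqrt{\dprime}}$ is slightly too large for the corrected radius.
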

\begin{proof}
The proof will be given in Section~\ref{s:proofoflemmaip} 
\end{proof}

\subsection{Proof of Theorem~\ref{thm:maintheorem}}\label{s:proofofmainresult}

Here we adopt the same notation as in Section~\ref{s:bs}. 
Observe that, for each $j\in\NN$, $\PoissonI{\cfunction{\Vb_j}} \in {\hinfty}$. 
Then Lemma~\ref{l:Carleson}, 
Lemma~\ref{l:ip}, Lemma~\ref{l:C1950}, and 
imply that 
\begin{equation}
\text{for a.e.} 
\bpoint\in\Cb\setminus\Vb_j,
\osc{\PoissonI{\cfunction{\Vb_j}}}{\bpoint}{\af} \geq c_0
\label{e:aepo}
\end{equation}
where $c_0$ is the Carleson tent constant. 
Let $s \eqdef{}1+\frac{1+c_0}{c_0}$ and, as in  
\cite{Zygmund1949}, 
$$
\bfunction=\sum_{j\geq1}s^{-j}\cfunction{\Vb_j}
$$
It follows that 
$$
\PoissonI{\bfunction}=\sum_{j\geq1}s^{-j}\PoissonI{\cfunction{\Vb_j}}
$$
Define $\Wb\eqdef\cap_{j\geq1}\Vb_j$ and observe that 
$|\Wb|=0$.
\begin{claim}
For a.e. $\bpoint\in\Cb\setminus\Wb$, $\osc{\PoissonI{\bfunction}}{\bpoint}{\af}>0$.
\label{claim:1} 
\end{claim}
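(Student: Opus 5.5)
We follow Zygmund's decomposition argument. Fix $\bpoint\in\Cb\setminus\Wb$ outside a suitable null set. Since the sets $\Vb_j$ are decreasing in $j$ (by \eqref{e:Vn}) and $\bpoint\notin\Wb$, there is a least index $m\in\NN$ with $\bpoint\notin\Vb_m$. Then $\bpoint\notin\Vb_j$ for all $j\geq m$, while $\bpoint\in\Vb_j$ for all $j<m$.

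The null set to discard is the countable union over $j$ of the exceptional sets in \eqref{e:aepo}. Apply \eqref{e:aepo} with $j=m$; this is legitimate because $\bpoint\in\Cb\setminus\Vb_m$ and $\bpoint$ lies outside the exceptional null set for index $m$. We obtain
$$\osc{\PoissonI{\cfunction{\Vb_m}}}{\bpoint}{\af}\geq c_0 .$$

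Now split
$$\PoissonI{\bfunction}=\sum_{j<m}s^{-j}\PoissonI{\cfunction{\Vb_j}}+s^{-m}\PoissonI{\cfunction{\Vb_m}}+\sum_{j>m}s^{-j}\PoissonI{\cfunction{\Vb_j}} .$$
We bound the oscillation of each piece along $\af(\bpoint)$.

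\textbf{Head terms $j<m$.} Here $\bpoint\in\Vb_j$ and $\Vb_j$ is open, so $\cfunction{\Vb_j}\equiv1$ near $\bpoint$. Hence $\PoissonI{\cfunction{\Vb_j}}(\dpoint)\to1$ as $\dpoint\to\bpoint$ unrestrictedly in $\hd$, by the standard local continuity of the Poisson integral at points where the boundary datum is continuous. In particular it converges along $\af(\bpoint)$, so these terms contribute zero oscillation. This is the one point where $\bpoint\in\Vb_j$ must be used, and it relies on the $\Vb_j$ being open.

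\textbf{Tail terms $j>m$.} Each $\PoissonI{\cfunction{\Vb_j}}$ takes values in $[0,1]$. So the tail takes values in $[0,T]$ with $T=\sum_{j>m}s^{-j}=s^{-m}/(s-1)$, and its oscillation is at most $T$.

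\textbf{Combining.} Oscillation is subadditive in the reverse direction, i.e.\ $\operatorname{osc}(a+b)\geq\operatorname{osc}(a)-\operatorname{osc}(b)$ for bounded functions. Therefore
$$\osc{\PoissonI{\bfunction}}{\bpoint}{\af}\geq s^{-m}c_0-\frac{s^{-m}}{s-1}=s^{-m}\Bigl(c_0-\frac{c_0}{1+c_0}\Bigr)=s^{-m}\frac{c_0^2}{1+c_0}>0 .$$
This uses $s-1=(1+c_0)/c_0$.

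The main subtlety is bookkeeping rather than analysis. One must make sure the exceptional set is a countable union of null sets, so that the result holds for a.e.\ point of $\Cb\setminus\Wb$. One must also check that the lower bound $c_0$ from \eqref{e:aepo}, which comes from Lemmas~\ref{l:Carleson}, \ref{l:ip} and \ref{l:C1950}, beats the tail bound, which is exactly what the choice of $s$ guarantees.
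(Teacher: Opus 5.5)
Your proposal is correct and follows the paper's proof essentially step for step. You take the least index $m$ with $\bpoint\notin\Vb_m$, use openness of $\Vb_k$ for $k<m$ to make those terms contribute zero oscillation, apply \eqref{e:aepo} outside the countable union of exceptional null sets to get oscillation at least $s^{-m}c_0$ from the $m$-th term, and bound the tail's oscillation by $s^{-m}/(s-1)$. Your explicit lower bound $s^{-m}c_0^2/(1+c_0)$ just makes concrete the paper's final inequality $s^{-m}c_0-s^{-m}\frac{1}{s-1}>0$.
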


Since the set $\Cb\setminus\Wb$ has full measure in $\bhd$, Claim~\ref{claim:1} completes the proof of Theorem~\ref{thm:maintheorem}. 

\paragraph{Proof of Claim~\ref{claim:1}.}

Let $\bpoint\in\Cb\setminus\Wb$ and let $j$ be the smallest integer $n$ such that $\bpoint \notin \Vb_n$. 
Then $\bpoint$ belongs to the open set 
\begin{equation}
\label{1eqqqnn3}
\bigcap^{j-1}_{k=1} \Vb_k.
\end{equation}
For $k=1,2,\ldots, j-1$, the function $\cfunction{\Vb_k}$ is equal to 1 on the set \eqref{1eqqqnn3}; since this set is open, it follows that 
$\osc{\PoissonI{\cfunction{\Vb_k}}}{\bpoint}{\af}=0$
for each $k = 1, 2, \ldots, l-1$.  

Observe that \eqref{e:aepo} implies that 
$\osc{s^{-j}\PoissonI{\cfunction{\Vb_j}}
}{\bpoint}{\af}\geq s^{-j}c_0$ (discarding a null set $\Nb_j$). On the other hand, 
$$
\osc{\sum_{k\geq{}j+1}s^{-k}\PoissonI{(\cfunction{\Vb_k})}}{\bpoint}{\af}
\leq
\sum_{k\geq{j+1}}s^{-k}
\leq
s^{-j}\frac{1}{s-1}
$$
It follows that if $\bpoint\not\in\Nb$, where $\Nb\eqdef\bigcup_k\Nb_k$, 
$\osc{\PoissonI{\bfunction}}{\bpoint}{\af}\geq
s^{-j}c_0-s^{-j}\frac{1}{s-1}>0$.
Since the set 
$(\Cb \setminus \Wb) \setminus \Nb$ 
has full measure in 
$\Cb \setminus \Wb$, the proof of Claim~\ref{claim:1} is completed.

\subsection{Proof of Lemma~\ref{l:C1950}}
\label{s:proofoflemmaC1950}

If we denote $\bhd \setminus \Vb$ by $\wVb$ and 
the set on the right-hand side of~\eqref{e:C1950} by 
$\Qb$, then we have 
to prove that $\aesubset{\wVb}{\Qb}$, i.e., 
that $\wVb\setminus\Qb\in\nullsets$. 
This statement follows from the following claim. 
\begin{claim}
For each $\delta>0$ 
there exists  $\Sb\subset\wVb$ with 
$|\wVb\setminus\Sb|<\delta$ and 
\begin{equation}
\aesubset{\Sb}{\Qb}
\label{e:claim1} 
\end{equation}
\label{claim:oneNEW} 
\end{claim}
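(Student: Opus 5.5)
Fix $\delta>0$. The plan is the classical Calderón--Zygmund point-of-density argument, adapted to tangential regions: we find a large subset of $\wVb$ on which the order of tangency of $\af(\bpoint)$ is controlled uniformly, and then prove that $\PoissonI{\cfunction{\Vb}}$ tends to $0$ along $\af(\bpoint)$ at almost every point of that subset. First, by Egorov's Theorem applied to the sequence ${\{\bfun_n\}}_n$ on $\wVb\cap\bb_k$ for each $k$, summing the exceptional sets with weights $2^{-k}\delta$, we obtain $\Sb\subset\wVb$ with $|\wVb\setminus\Sb|<\delta$ on which $\bfun_n\to0$ uniformly. The hypothesis $|\wVb|>0$ only guarantees that the statement is nontrivial. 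Measurability of $\bfun_n$ follows from regularity, as already noted in Section~\ref{s:bs}.

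Second, we replace $\Sb$ by a set of points where $\wVb$ is very dense, in a quantitative sense adapted to the uniform tangency rate. For $\bpoint\in\Sb$ and $\dpoint\in\ballrel{\bpoint}{r}$, write $\dpoint=(x,t)$. Then $|x-\bpoint|\le r$ and $t\le \bfun_{n}(\bpoint)\,|\dpoint-\bpoint|\le \epsilon_r r$, where $\epsilon_r\to0$ uniformly on $\Sb$. We split the Poisson integral as
$$
\PoissonI{\cfunction{\Vb}}(\dpoint)=\int_{\Vb\cap\ball{x}{Mt}}P_t(x-y)\,dy+\int_{\Vb\setminus\ball{x}{Mt}}P_t(x-y)\,dy .
$$
The second term is bounded by a tail of the Poisson kernel and is small when $M$ is large. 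The first term is bounded by $Ct^{-\dprime}|\Vb\cap\ball{x}{Mt}|$. Hence it suffices to show that, for a.e. $\bpoint\in\Sb$, the relative density of $\Vb$ in boundary balls of radius $Mt$ centred at points $x$ with $|x-\bpoint|\le r$ tends to $0$. Since $t\ll r$, these balls are far from $\bpoint$ at their own scale, so the Lebesgue density theorem at $\bpoint$ alone does not suffice.

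Third, this tangential density statement is the main obstacle. I would handle it with a maximal-function and Chebyshev argument, as in Zygmund's proof. Apply the Lebesgue density theorem to $\wVb$ to get, on a further subset of $\Sb$ of almost full measure, $|\Vb\cap\ball{\bpoint}{\rho}|\le\eta(\rho)\rho^{\dprime}$ uniformly, with $\eta\to0$, by another Egorov step. A ball $\ball{x}{Mt}$ with $|x-\bpoint|\le r$ then satisfies
$$
|\Vb\cap\ball{x}{Mt}|\le|\Vb\cap\ball{\bpoint}{2r}|\le\eta(2r)(2r)^{\dprime}.
$$
We need this to be $o(t^{\dprime})$, which requires $\eta(2r)\ll(t/r)^{\dprime}$. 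The tangency rate only gives $t/r\le\epsilon_r$, an upper bound, which is the wrong direction. So this naive comparison fails, and I expect to instead choose the exceptional set more cleverly. I would try to pass to a subset of $\wVb$ consisting of points $\bpoint$ such that every boundary ball $\bb$ centred within distance $r$ of $\bpoint$, with radius at least $\bfun_n(\bpoint)r$ (essentially the scale of the Carleson tents meeting $\ballrel{\bpoint}{r}$), has $|\bb\cap\Vb|\le\eta|\bb|$. The measure of the bad set is controlled by covering $\Vb$ with Whitney-type balls and estimating, for each such ball, the $\af$-shadow of its enlarged Carleson tent. The complement of a regular shadow is measurable by hypothesis~(3), and the shadow has measure controlled by $|\Vb\cap\wVb^{c}|=|\Vb|$ times a factor depending on the uniform rate on $\Sb$. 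Thus the bad set is small but not null; intersecting over a sequence $\eta\to0$ and taking the union over $\delta$ would give \eqref{e:claim1}. Making this shadow estimate precise is where I expect the real work to lie.
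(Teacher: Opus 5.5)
Your plan aims at a statement that is false. You try to show that $\PoissonI{\cfunction{\Vb}}(\dpoint)\to0$ along \emph{all} of $\af(\bpoint)$, for a.e.\ $\bpoint$ in a large subset of $\wVb$. You do this by proving that $\Vb$ has small relative density at the scale of every tent met by the tails $\ballrel{\bpoint}{r}$. The claim only asks for $\liminf=0$, and the stronger statement fails for exactly the sets to which Lemma~\ref{l:C1950} is applied. For $\Vb=\Vb_n$, Lemmas~\ref{l:ip} and~\ref{l:Carleson} give $\limsup\geq c_0$ along $\af(\bpoint)$ at every $\bpoint\in\Cb\setminus\Vb_n$, and this set has full measure in $\wVb$. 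The proof of Lemma~\ref{l:ip} shows why: at arbitrarily small scales $r\approx\varphi_j^{-1}$, $\af(\bpoint)$ enters tents over boundary balls that are contained in $\Vb_n$, centred within distance $r$ of $\bpoint$, and of radius exceeding $\bfun_{\varphi_j}(\bpoint)\,r$. So your proposed good set misses all of $\Cb\setminus\Vb_n$. For the same reason the shadow estimate you hope for cannot hold. For tangential regions the $\af$-shadow of the tents over a set of small measure can have almost full measure; this is precisely the inclusion $\Cb\setminus\Vb_n\subset\Zb(\Vb_n)$. Hence there is no bound by $|\Vb|$ times a constant depending on the tangency rate.

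The missing ingredient is hypothesis~(2). Your argument uses only tangency, regularity and Lebesgue density, and with those alone the claim can fail. To see this, take a dense open $\Vb$ of small measure and list the boundary balls $\acal{c_m}{\rho_m}\subset\Vb$ with rational centre and radius. Let $\af(\bpoint)$ consist of the points at height $\rho_m/2$ above $c_m$, for those $m$ with $\rho_m\leq|c_m-\bpoint|^2$. This is a regular tangential system, and every point of $\af(\bpoint)$ lies in a tent over a ball contained in $\Vb$. Therefore the $\liminf$ is $\geq c_0$ at every point of $\wVb$.

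The paper's route is different and runs as follows.
\begin{enumerate}
\item By Fatou's theorem, $\ntbv{(\PoissonI{\cfunction{\Vb}})}=0$ on a set $\Xb$ of full measure in $\wVb$.
\item Egorov applied to $\bfunp_j=\sum_b2^{-b}\bfung_{(j,b)}$ gives increasing perfect sets $\Sb_b$ with $|\Xb\setminus\bigcup_b\Sb_b|<\delta$. On each $\Sb_b$ the nontangential convergence to $0$ is uniform for every fixed aperture.
\item For $\bopoint\in\Sb_{b_0}$ with $\af(\bopoint)$ $b_0$-firm, firmness gives $\acal{\bopoint}{s_r}\subset\Gamma_{b_0}^{\ast}(\ballrel{\bopoint}{r})$.
\item Choosing $\bpoint_r\in\Sb_{b_0}\cap\acal{\bopoint}{s_r}$ with $\bpoint_r\neq\bopoint$ then yields $\dpoint_r\in\ballrel{\bopoint}{r}\cap\Gamma_{b_0}(\bpoint_r)$ close to $\bpoint_r$. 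Hence $\PoissonI{\cfunction{\Vb}}(\dpoint_r)\leq\bfung_{(n_\epsilon,b_0)}(\bpoint_r)<\epsilon$.
\end{enumerate}
Firmness thus transfers nontangential smallness at nearby good points to \emph{some} point of every tail of $\af(\bopoint)$, which is exactly what $\liminf=0$ requires. Neither your Egorov step on $\bfun_n$ nor any density analysis is needed.
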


\paragraph{Proof of Claim~\ref{claim:oneNEW}.} 
Let $\delta>0$. 
A well-known extension of Fatou's Theorem to Poisson integrals 
(see e.g. \cite{SteinWeiss1971}) says that 
$$
\aeequal{\Fatouset{\PoissonI{
\cfunction{\Vb}
}}}{\bhd}
\text{ and }
\ntbv{(\PoissonI{\cfunction{\Vb}})}(\bpoint)=1_{\Vb}(\bpoint),  
\forall\bpoint\in\Fatouset{P(1_{\Vb})}.
$$ 
It follows that 
$\ntbv{(\PoissonI{
\cfunction{\Vb}})}(\bpoint)=0$
for a.e. 
$\bpoint\in\wVb$.

We denote by $\Xb$ the subset of $\wVb$ of full measure 
in $\wVb$ with the property that 
$\ntbv{(\PoissonI{\cfunction{\Vb}})}(\bpoint)=0$
for all $\bpoint\in\Xb$, and define, 
for each $(j,b)\in\NN\times\NN$, the function 
$$
\bfung_{(j,b)}:\Xb\to(0,1]
\text{ by }
\bfung_{(j,b)}(\bpoint)\eqdef\sup\{
\PoissonI{\cfunction{\Vb}}(\dpoint):\dpoint\in\Gamma_{b}(\bpoint)\cap\ball{\bpoint}{1/j}\}.
$$
Observe that, $\forall\bpoint\in\Xb$ and $\forall j,b\in\NN$
\begin{equation}
(i) \lim_{j\to+\infty}\bfung_{(j,b)}(\bpoint)=0,\,
(ii)\, \bfung_{(j+1,b)}(\bpoint)\leq\bfung_{(j,b)}(\bpoint)\leq \bfung_{(j,b+1)}(\bpoint) 
\label{e:bfung}
\end{equation}
and define 
\begin{equation}
\bfunp_j(\bpoint)\eqdef\sum_{b=1}^{+\infty}2^{-b}\bfung_{(j,b)}(\bpoint)
\label{e:bfunp}
\end{equation} 
We claim that $\lim_{j\to+\infty}\bfunp_{j}(\bpoint)=0$ 
$\forall\bpoint\in\Xb$. 
Indeed, if $\bpoint\in\Xb$ and $\epsilon>0$ let $b_{\epsilon}\in\NN$ with
$\sum_{b>b_{\epsilon}}2^{-b}<\epsilon/2$, and let $j_{(\epsilon,\bpoint)}\in\NN$ such that 
if $j\geq{}j_{(\epsilon,\bpoint)}$ then $\bfung_{(j,b_\epsilon)}(\bpoint)<\frac{\epsilon2^{b_\epsilon}}{2(2^{b_\epsilon}-1)}$. 
If $j\geq{}j_{(\epsilon,\bpoint)}$, \eqref{e:bfung}-(ii) implies that 
$\bfunp_j(\bpoint)$ is bounded by 
$\sum_{b=1}^{b_\epsilon}\bfung_{(j,b)}(\bpoint)2^{-b}
+
\sum_{b>b_\epsilon}\bfung_{(j,b)}(\bpoint)2^{-b}
\leq
\bfung_{(j,b_\epsilon)}(\bpoint)\sum_{b=1}^{b_\epsilon}2^{-b}
+
\sum_{b>b_\epsilon}2^{-b}<\epsilon$.

Write $\Xb$ as the union $\Xb=\bigcup_{k\in\NN}{\Xb_k}$, where 
the sets $\Xb_k$ are disjoint,  
each set $\Xb_k$ is measurable, and it has positive measure bounded by $1$. 
For each $k\in\NN$, apply 
Egorov's theorem to $\Xb_k$ with respect to the sequence 
${\{\bfunp_{j}\}}_{j\in\NN}$. 
Then for each $i\in\NN$ there exists a perfect subset 
$\Sb(i,k)\subset\Xb_k$ such that the sequence 
${\{\bfunp_{j}\}}_{j\in\NN}$
converges uniformly to zero on 
$\Sb(i,k)$ and 
$|\Xb_{k}\setminus\Sb(i,k)|<\delta2^{-i}|\Xb_k|$. We may assume, without loss of generality, that $\Sb(i,k)\subset\Sb(i+1,k)$. 

Define, for each $b\in\NN$, $\Sb_b\eqdef\bigcup_{i=1}^{b}\Sb(b,i)$: Then $\Sb_b$ is perfect, 
\begin{equation}
\Sb_b\subset\Sb_{b+1}\subset\Xb
\label{e:increasing} 
\end{equation}
and, for each $b\in\NN$, the sequence ${\{\bfunp_{j}\}}_{j\in\NN}$
converges uniformly to zero on $\Sb_b$. 

Let $\Sb\eqdef\bigcup_{b=1}^{+\infty}\Sb_b\subset\Xb$. Then $|\Xb\setminus\Sb|<\delta$, since 
(a) 
$\Xb\setminus\Sb=\bigcup_{b=1}^{+\infty}\Xb_b\setminus\Sb$; 
for each $b\in\NN$, $\Xb_b\setminus\Sb\subset\Xb_b\setminus\Sb(b,b)$, 
and hence 
$$
|\Xb\setminus\Sb|<\sum_{b=1}^{+\infty}|\Xb_b|2^{-b}\delta
\leq
\sum_{b=1}^{+\infty}2^{-b}\delta=\delta
$$

Let $\bopoint\in\Sb$ and assume that 
$\af(\bopoint)$ is $b_1$-firm at $\bopoint$, $b_1\in\NN$.
Assume that $\bopoint\in\Sb_{b_2}$, $b_2\in\NN$, and 
let $b_0\eqdef\max\{b_1,b_2\}$. Then 
$\af(\bopoint)$ is $b_0$-firm at $\bopoint$ and~\eqref{e:increasing} implies that  
$\bopoint\in\Sb_{b_0}$. 

Let $\epsilon>0$. Since the sequence 
${\{\bfunp_{j}\}}_{j\in\NN}$ converges to zero uniformly on $\Sb_{b_0}$, 
there exists $n_{\epsilon}\in\NN$ such that 
\begin{equation}
\text{if }
j\geq n_{\epsilon}
\text{ then }
\bfunp_{j}(\bpointx)
<2^{-b_0}\epsilon \text{ for all } \bpointx\in\Sb_{b_0}
\label{e:supUNO} 
\end{equation}
Let $r\in(0,\frac{2^{-10}}{n_0})$. 
Since $\af(\bopoint)$ is $b_0$-firm at $\bopoint$,
there exists $s_r>0$ such that 
\begin{equation}
\acal{\bopoint}{s_r}\subset\Gamma_{b_0}^{\ast}(\ballrel{\bopoint}{r})
\label{e:shadow2}
\end{equation}
We may assume, without loss of generality, that $s_r<{2^{-10}}{r}$.

Since $\Sb_{b_0}$ is perfect, there exists 
$\bpoint_r\in\Sb_{b_0}\cap(\acal{\bopoint}{s_r}\setminus\{\bopoint\})$. 
Then~\eqref{e:shadow2} implies that 
$$
\bpoint_r\in\Gamma_{b_0}^*(\{\dpoint_r\}) 
\text{ for some }
\dpoint_r\in\ballrel{\bopoint}{r}
$$
hence $\dpoint_r\in\Gamma_{b_0}(\bpoint_r)\cap\af(\bopoint)\cap\ball{\bopoint}{r}$. 
Observe that 
$$
|\dpoint_r-\bpoint_r|\leq
|\dpoint_r-\bopoint|+|\bopoint-\bpoint_r|<r+s_r
<\frac{2^{-10}}{n_{\epsilon}}+\frac{2^{-20}}{n_{\epsilon}}
<\frac{1}{n_{\epsilon}}
$$
Hence $\dpoint_r\in\ball{\bpoint_r}{\frac{1}{n_{\epsilon}}}
\cap{}\Gamma_{b_0}(\bpoint_r)$ and 
$\bpoint_{r}\in\Sb_{b_0}$.  
Then~\eqref{e:bfunp} and~\eqref{e:supUNO} yield
$$
2^{-b_0}\bfung_{(n_\epsilon,b_0)}(\bpoint_r)\leq\bfunp_{n_\epsilon}(\bpoint_r)<
2^{-b_0}\epsilon
$$
(with $\bpointx=\bpoint_r$) and thus 
$$
(\PoissonI{\cfunction{\Vb}})(\dpoint_r)\leq{}g_{(n_0,b_0)}(\bpoint_r)<\epsilon.
$$
Hence we have proved that $\forall\bopoint\in\Sb$ and $\forall\epsilon>0$ 
$\exists\, n_{\epsilon}\in\NN$ such that $\forall \, r\in(0,\frac{2^{-10}}{n_{\epsilon}})$ 
there exists $\dpoint_r\in\af(\bopoint)\cap\ball{\bopoint}{r}$
with 
$(\PoissonI{1_{\Vb}})(\dpoint_r)<\epsilon$, and this implies that 
\begin{equation}
\liminf_{\stackrel{\dpoint\to\bopoint}{\dpoint\in\af(\bopoint)}}\, 
(\PoissonI{\cfunction{\Vb}})(\dpoint) =0.
\label{e:almost}
\end{equation}
Hence $\Sb\subset\Qb$. 
We have thus proved that $\forall\delta>0$ there exists 
$\Sb\subset\Xb$ with $|\Xb\setminus\Sb|<\delta$ and $\Sb\subset\Qb$. It follows that 
$\aesubset{\Xb}{\Qb}$, and since $\Xb$ has full measure in $\wVb$, it follows that 
$\aesubset{\wVb}{\Qb}$. The proof is concluded.

\subsection{Proof of Lemma~\ref{l:ip}}
\label{s:proofoflemmaip}

Before we delve into the proof, we would like to explain why it is more 
involved than one would perhaps think. 
It is indeed true that, given $\bpoint,\bpointv\in\bhd$, with 
$\bpoint\not=\bpointv$, and given $b\in\NN$ and $r>0$ there exists
$n=n(\bpoint,\bpointv,b,r)\in\NN$ such that 
$\Gamma_{b}(\bpointv)\setminus\Gamma_{n}(\bpoint)\subset\ball{\bpointv}{r}$: 
However, in this statement, $n$ depends upon $\bpoint$, 
$\bpointv$, 
$b$,
and $r$, 
while, in the statement that we need to prove, the variables appear in a different order 
and hence the dependence of $n$ on the other variables is not admissible. 

Let $\bopoint\in\Cb\setminus\Vb_n$. 
Observe that there exists $\bsegnato\in\NN$ such that 
\begin{equation}
\text{(i) } \bsegnato\geq 10
\text{ and (ii) }
 \af(\bopoint)
\text{ is $\bsegnato$-firm at } \bopoint 
\label{e:bsegnato}
\end{equation}
Since $\bopoint\in\Cb\setminus\Vb_n$, there exists $\dnewsegnato\in\NN$ with 
$\bopoint\in\Cb_{\dnewsegnato}$, and \eqref{e:Cisincreasing} implies  that 
\begin{equation}
 j\geq \dnewsegnato\implies \bopoint\in\Cb_j
\text{ and hence }
\eqref{e:aenne}
\text{ implies that }
\bfunction_{\varphi_j}(\bopoint)\leq{}c\frac{10}{22}\frac{2^{-j}}{v_j}
\label{e:fromd}
\end{equation}
Moreover, 
\begin{equation}
\forall{}j\geq n, \,\,\bopoint\not\in \Ob_j
\label{e:fromdBIS}
\end{equation}
Let $\epsilon>0$. 
Since $\af(\bopoint)$ is tangential to $\bhd$ at $\bopoint$
there exists $r=r_{\epsilon}>0$ such that 
\begin{equation}
\sup\{\tau(\bopoint,\dpoint):\dpoint\in
\ballrel{\bopoint}{r_{\epsilon}}
\}<\frac{1}{10}\frac{1}{{(1+\bsegnato)}^2}
\text{ and }
r_{\epsilon}<\epsilon\cdot{}2^{-10}
\label{e:epsilon'} 
\end{equation}
Then \eqref{e:bsegnato} implies that   
\begin{equation}
\Gamma_{\bsegnato}^{\ast}(\ballrel{\bopoint}{r_{\epsilon}})
\text{ contains a boundary ball of center }
\bopoint
\label{e:padj} 
\end{equation}
i.e., there exists $\newradius=\newradius_{r}=\newradius_{r_{\epsilon}}>0$ such that 
\begin{equation}
\textup{(i) }
\,
\acal{\bopoint}{\newradius}
\subset\Gamma_{\bsegnato}^{\ast}(\ballrel{\bopoint}{r_{\epsilon}})
\label{e:better}
\end{equation}
Moreover, we may assume, without loss of generality, that
\begin{equation}
\textup{ (ii) }
\,
\newradius<r_{\epsilon}\cdot{}2^{-10}.
\label{e:theta} 
\end{equation}
Now, select $j=j_{\epsilon}\in\NN$ such that 
\begin{equation}
\begin{aligned}
&\textup{ (i) } 
j_{\epsilon}>\max\{n,10\}
\quad 
\textup{ (ii) } 
\frac{1}{\varphi_{j_{\epsilon}}}<\frac{2}{\sqrt{\dprime}}\newradius\cdot{2^{-10}}
\quad 
\textup{ (iii) }
j_{\epsilon}>\dnewsegnato
\\
&\textup{ (iv) } v_{j_{\epsilon}}=1+\bsegnato
\quad 
\textup{ (v) }  
j_{\epsilon}>2\max\{\bopoint(1),\bopoint(2),\ldots,\bopoint(\dprime)\}
\end{aligned}
\label{e:selectenne}
\end{equation}
Let $\vob\in\Ub_j$ 
a point of $\Ub_j$ that minimizes the distance from $\bopoint$. Then 
\begin{equation}
2^{-j}\frac{1}{\varphi_j}\stackrel{(a)}{\leq}|\vob-\bopoint|\stackrel{(b)}{\leq}\frac{1}{2}\sqrt{\dprime}\frac{1}{\varphi_{j}}
\label{e:estimate1NEW} 
\end{equation}
where (a) follows from the fact that $\bopoint\not\in\Ob_j$
and (b) from the fact that $\vob$ the point of $\Ub_j$ closest to $\bopoint$ 
and hence $|\vob-\bopoint|$ cannot exceed half the length of the diagonal 
of the cube $\acube{\vob}{{(\varphi_j)}^{-1}}$. 

Observe that 
$$
\vob\in\acal{\bopoint}{s}\stackrel{(c)}{\subset}\acal{\bopoint}{r_{\epsilon}}
\stackrel{(d)}{\subset}\Gamma_{\bsegnato}^{\ast}(\ballrel{\bopoint}{r_{\epsilon}})
$$
where (c) follows from~\eqref{e:theta} and
(d) follows from~\eqref{e:better}, hence  
$\vob\in\Gamma_{\bsegnato}^{\ast}(\ballrel{\bopoint}{r_{\epsilon}})$, 
and therefore 
\begin{equation}
\text{there exists }
\zsegnato\in\ballrel{\bopoint}{r_{\epsilon}}
\text{ such that }
\zsegnato\in\Gamma_{\bsegnato}(\vob)
\label{e:projectionNEW} 
\end{equation}
Since 
$\zsegnato\in\ballrel{\bopoint}{r_{\epsilon}}$, \eqref{e:epsilon'} implies that 
$\tau(\bopoint,\zsegnato)<\frac{1}{10}\frac{1}{{(1+\bsegnato)}^2}$, i.e.
\begin{equation}
\zsegnato(\dimn)<|\bopoint-\zsegnato|\frac{1}{10}\frac{1}{{(1+\bsegnato)}^2}
\label{e:alzataNEW} 
\end{equation}
Since $\zsegnato\in\Gamma_{\bsegnato}(\vob)$ it follows that 
$\tau(\vob,\zsegnato)>1/(1+\bsegnato)$, i.e. 
\begin{equation}
|\vob-\zsegnato|<\zsegnato(\dimn) (1+\bsegnato)
\label{e:alzata1NEW}
\end{equation}
Then~\eqref{e:alzataNEW} and~\eqref{e:alzata1NEW} imply that 
\begin{equation}
|\vob-\zsegnato|<|\bopoint-\zsegnato| \frac{1}{10}\frac{1}{1+\bsegnato}
\label{e:alzata2}
\end{equation}
Then 
$|\bopoint-\zsegnato|\leq|\bopoint-\vob|+|\vob-\zsegnato|<|\bopoint-\vob|+|\bopoint-\zsegnato| 
\frac{1}{10}\frac{1}{1+\bsegnato}$, hence
\begin{equation}
|\bopoint-\zsegnato|\left[1-\frac{1}{10}\frac{1}{1+\bsegnato}\right]<|\bopoint-\vob|
\label{e:alzata3NEW}
\end{equation}
Observe that
$
\displaystyle{1-\frac{1}{10}\frac{1}{1+\bsegnato}=\frac{1+\frac{9}{10\bsegnato}}{1+\frac{1}{\bsegnato}}}$ and hence~\eqref{e:estimate1NEW}-(b) 
and~\eqref{e:alzata3NEW} imply 
\begin{equation}
|\bopoint-\zsegnato|<
\frac{1+\frac{1}{\bsegnato}}{1+\frac{9}{10\bsegnato}}\frac{1}{2}\sqrt{\dprime}\frac{1}{\varphi_{j}}
\stackrel{(\text{e})}{<}\frac{3}{4}\sqrt{\dprime}\frac{1}{\varphi_{j}}
\label{e:alzata5NEW}
\end{equation}
where (e) follows from~\eqref{e:bsegnato}-(i), which implies that 
$\displaystyle{\frac{1+\frac{1}{\bsegnato}}{1+\frac{9}{10\bsegnato}}\leq1+\frac{1}{109}}$. 
Now observe that~\eqref{e:projectionNEW} and~\eqref{e:alzata5NEW} imply that 
$\zsegnato\in\ballrel{\bopoint}{\frac{3\sqrt{\dprime}}{4\varphi_{j}}}$, 
and thus~\eqref{e:fn} implies that  
\begin{equation}
\tau(\bopoint,\zsegnato)\leq{}f_{\phi_j}(\bopoint)
\label{e:alzata7NEW}
\end{equation} 
Since 
$\bopoint\in\Cb_j$, \eqref{e:aenne} 
and~\eqref{e:selectenne}
imply that 
\begin{equation}
f_{\phi_j}(\bopoint)<c\frac{10}{22}\frac{2^{-j}}{v_j}
=
c\frac{10}{22}\frac{2^{-j}}{1+\bsegnato}
\label{e:alzata8NEW}
\end{equation} 
\eqref{e:alzata7NEW} and~\eqref{e:alzata8NEW} imply that 
$\tau(\bopoint,\zsegnato)=\frac{\zsegnato(\dimn)}{|\bopoint-\zsegnato|}
<c\frac{10}{22}\frac{2^{-j}}{1+\bsegnato}$, i.e., 
\begin{equation}
\zsegnato(\dimn)<c
|\bopoint-\zsegnato|
\frac{10}{22}\frac{2^{-j}}{1+\bsegnato}
\label{e:schiacciataNEW}
\end{equation} 
and now~\eqref{e:alzata1NEW}, 
\eqref{e:schiacciataNEW}, and \eqref{e:alzata5NEW}
imply that
\begin{equation}
|\vob-\zsegnato|<c\frac{3}{4}\sqrt{\dprime}\frac{1}{\varphi_{j}}\frac{10}{22}2^{-j}
\label{e:schiacciatabisNEW}
\end{equation} 
Let $c=\frac{22}{10\sqrt{\dprime}}$. Then~\eqref{e:schiacciatabisNEW} implies 
that $|\vob-\zsegnato|<\frac{3}{4}2^{-j}\frac{1}{\varphi_j}$.  
and hence 
\begin{equation}
\zsegnato\in \boldsymbol{\Delta}(\acal{\vob}{2^{-j}\frac{1}{\phi_j}})
\label{e:schiacciatatentNEW}
\end{equation} 
Now observe that 
$\acal{\vob}{2^{-j}\frac{1}{\phi_j}}\subset\Ob_j\subset\Vb_n$ and that 
$\acal{\vob}{2^{-j}\frac{1}{\phi_j}}\subset\ball{\bopoint}{\epsilon}$. 
These facts conclude the proof.

\paragraph{Tool and computational resource disclosure.}

The research that led to this paper was conducted and this paper was written 
without recourse to AI. 

\paragraph{Acknowledgement.}
This research has been supported by the Sida-funded UR-Sweden Program for Research, Higher Learning
and Institution Advancement, sub-program Strengthening Research Capacity in Mathematics, Statistics
and Their Applications.
Partial support from Fondi di Ricerca di Ateneo 
of the Università “G. d'Annunzio” in Chieti and Pescara, 
and from 
Indam-Gnampa, in Italy,  is gratefully acknowledged.

\end{document}